\newtheorem{theorem}{Theorem}
\newtheorem{corollary}{Corollary}
\newtheorem{lemma}{Lemma}
\begin{document}

% Use the \preprint command to place your local institutional report
% number in the upper righthand corner of the title page in preprint mode.
% Multiple \preprint commands are allowed.
% Use the 'preprintnumbers' class option to override journal defaults
% to display numbers if necessary
%\preprint{}

%Title of paper
\title{The Number of Spanning Trees in some Special Self-Similar Graphs}

% repeat the \author .. \affiliation  etc. as needed
% \email, \thanks, \homepage, \altaffiliation all apply to the current
% author. Explanatory text should go in the []'s, actual e-mail
% address or url should go in the {}'s for \email and \homepage.
% Please use the appropriate macro foreach each type of information

\author{M. A. Morsy}
\email{s-malaamorsy@zewailcity.edu.eg}
\affiliation{University of Science and Technology in Zewail City of Science and Technology, 6th of October City, Giza, Egypt.}
\author{M. Anwar}
\affiliation{Department of Mathematics, Pennsylvania State University, University Park, PA 16802, USA.}
\affiliation{Department of Mathematics, Faculty of Science, Ain Shams University, Cairo, Egypt.}
\author{A. W. Aboutahoun}
\affiliation{Applied Mathematics and Information Science Department, Zewail City of Science and Technology, 6th of October City, Giza, Egypt.}
\affiliation{Department of Mathematics, Faculty of Science, Alexandria University, Alexandria, Egypt.}

%Collaboration name if desired (requires use of superscriptaddress
%option in \documentclass). \noaffiliation is required (may also be
%used with the \author command).
%\collaboration can be followed by \email, \homepage, \thanks as well.
%\collaboration{}
%\noaffiliation

\begin{abstract}
In this paper, we introduce two families of planar and self-similar graphs which have small-world properties. The constructed models are based on an iterative process where each step of a certain formulation of modules results in a final graph with a self-similar structure. The number of spanning trees of a graph is one of the most graph-theoretical parameters, where its applications range from the theory of networks to theoretical chemistry. Two explicit formulas are introduced for the number of spanning trees for the two models. With explicit formulas for some of their topological parameters as well. 

Keywords: \textit{Self-Similar Graphs, Cycle Graphs, Wheel Graphs, Spanning Trees, Clustering Coefficients, Entropy of Graph}. 
\end{abstract}

% insert suggested keywords - APS authors don't need to do this
%\keywords{}

\maketitle

\section{Introduction} \label{sec:intro}

Having information about the number of spanning trees is not just valuable for mathematics; however, it is also valuable for many different fields, in physics, it is beneficial with regular lattice, Sierpinski gaskets, and more, it has a direct relation to the probability theory, related to the chemistry through the chemical isomers, in biology, social sciences, and of course, networks, as the number of spanning tree is the crucial measure of network reliability \cite{Anema_2016}. According to \cite{DHAR200629, PhysRevLett.64.1613}, the number of spanning trees of a connected network is precisely equal to the number of recurrent configurations of the Abelian sand-pile model on the network, which is equivalent to the chip-firing game 17 under certain constraints and is a paradigm for self-organized criticality.

Enumerating the number of spanning trees in finite graphs is not a recent problem. It started over 100 years ago as one of the graph's most critical and valuable invariants. In 1847, Kirchhoff came up with the theorem of matrix-tree that can compute the number of the spanning tree concerning the determinant of the Laplacian matrix of the graph, and the Laplacian matrix calculated through the subtraction of the adjacency matrix of the graph from its degree matrix $L(G) = D(G) - A(G)$. The Degree matrix is a diagonal matrix with the degree of each vertex on the diagonals.  

However, counting the number of spanning trees using this method for large graphs will be complicated. Therefore, finding a way to compute the spanning tree without Kirchhoff's determinant will be helpful. 

Many attempts try to compute this, especially the trial of \cite{ELATIK2021106117}, which successfully calculated the number of spanning trees in a class of self-similar graphs. However, it is for only one case of cycle graphs; and there can be so many cycles and wheels with $n$ number of vertices and fractals that contain $m$ as the length of the path in fractals. Moreover, many types of graphs, such as complete graphs or general graphs overall, still need to be studied. This paper solves the problem of counting the spanning trees of the self-similar graphs constructed by starting with $C_n$ and $W_n$, with specific operations and general $m$ for the length path. Moreover, it gives insight into the entropy of the self-similar graphs, how they got affected by these operations of fractals, and studies some topological properties of the constructed graphs. 

In the remainder of this paper, section \ref{sec:preliminary} introduces the needed preliminary for this paper, setting the definitions provided with the needed figures, and the table of notations. Section \ref{sec:CN} talks about the first models which are the self-similar graphs based on cycle graphs; sub-section(A) is for how the fractals or the self-similar graph grows, sub-section(B) is for the theorem of the number of spanning trees for this model, sub-section(C) is for the Entropy of this model's graphs, and sub-section(D) is for the clustering coefficients for this model's graphs. Section \ref{sec:WN} is the order of section \ref{sec:CN} but the second models which are the self-similar graphs based on wheel graphs. 

\section{Preliminary} \label{sec:preliminary}

In this section, we set some important requirements and definitions before the main theorem and results, and firstly, we stick to the terms and definitions of Bondy and Murty \cite{bondy_murty_1976}. All graphs $G(V(G), E(G))$ of the paper are simple graphs, and for all e in $E(G)$, there exists $u,v$ such that: 
\begin{align*}
    \Psi(G) &: E(G) {\longrightarrow} (V(G){\times}V(G)) \\
    \psi(G) &: e {\longrightarrow} (u,v)    
\end{align*}

\textbf{Definition 1.} A \textit{spanning tree} of any given graph is a sub-graph of the graph which is connected and has no cycles. It is a tree if it has no cycles. Graph $G$ is connected if it contains spanning tree $\tau$ because any two vertices have a connection by a path in $\tau$, and hence in $G$. However, if $G$ is a connected graph that is not a tree and $e$ is an edge of one of its cycles. Then $(G{\setminus}e)$ is a spanning sub-graph of $G$ that is likewise connected since an edge of a graph $G$ is a cut edge if and only if $e$ belongs to no cycle of $G$. We get a spanning tree of $G$ by repeatedly eliminating edges in cycles until the only edge left is a cut-edge. As a result, a graph is only considered connected if and only if it has a spanning tree.

\textbf{Definition 2.} An operation is called \textit{Edge-Path-Transformation} and shortly as $\xi_{1}$ or $(EPT)$, if we replace every edge $e$ in $E(G)$ with path of length $m$ by adding $(m-1)$ vertices, as in the figure \ref{Fig:1}, where $\xi_{1}$ is applied on $C_4$, with $m = 2$. Moreover, this leads to having a central graph donated by $H^{(i,m)}$ where $i$ is the number of times this operation has been applied to the graph, so $H^{(i,m)}$ is the graph obtained by this operation.
\begin{figure}[h]
    \centering
    \includegraphics[width=0.2\textwidth, height=0.07\textwidth]{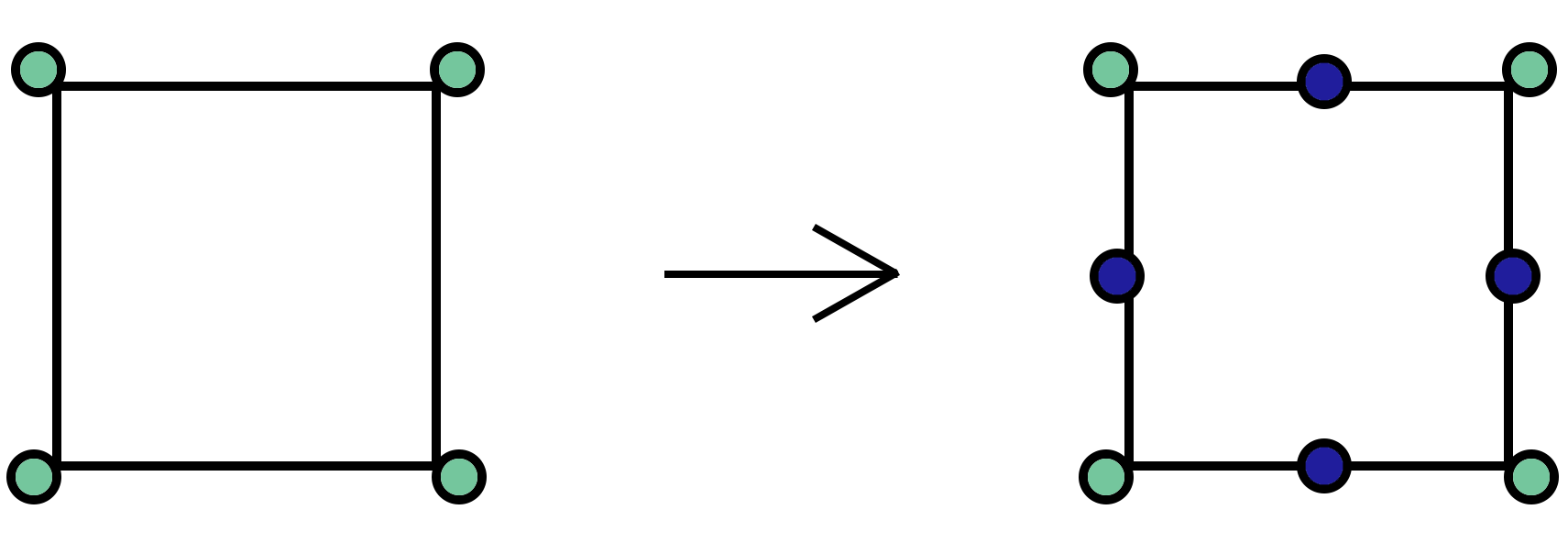}
    \caption{$(EPT)$ is applied on $C_4$ with $m = 2$.}
    \label{Fig:1}
\end{figure} 

\textbf{Definition 3.} An operation is called \textit{Graph-linking-Vertex} and shortly as $\xi_{2}$ or $(GLV)$, if we attach every vertex $v$ in the graph $G^{(i,m,n)}$ to a new same graph $G^{(0,m,n)}$, as in the figure \ref{fig:2}.
\begin{figure}[h]
    \centering
    \includegraphics[width=0.2\textwidth, height=0.1\textwidth]{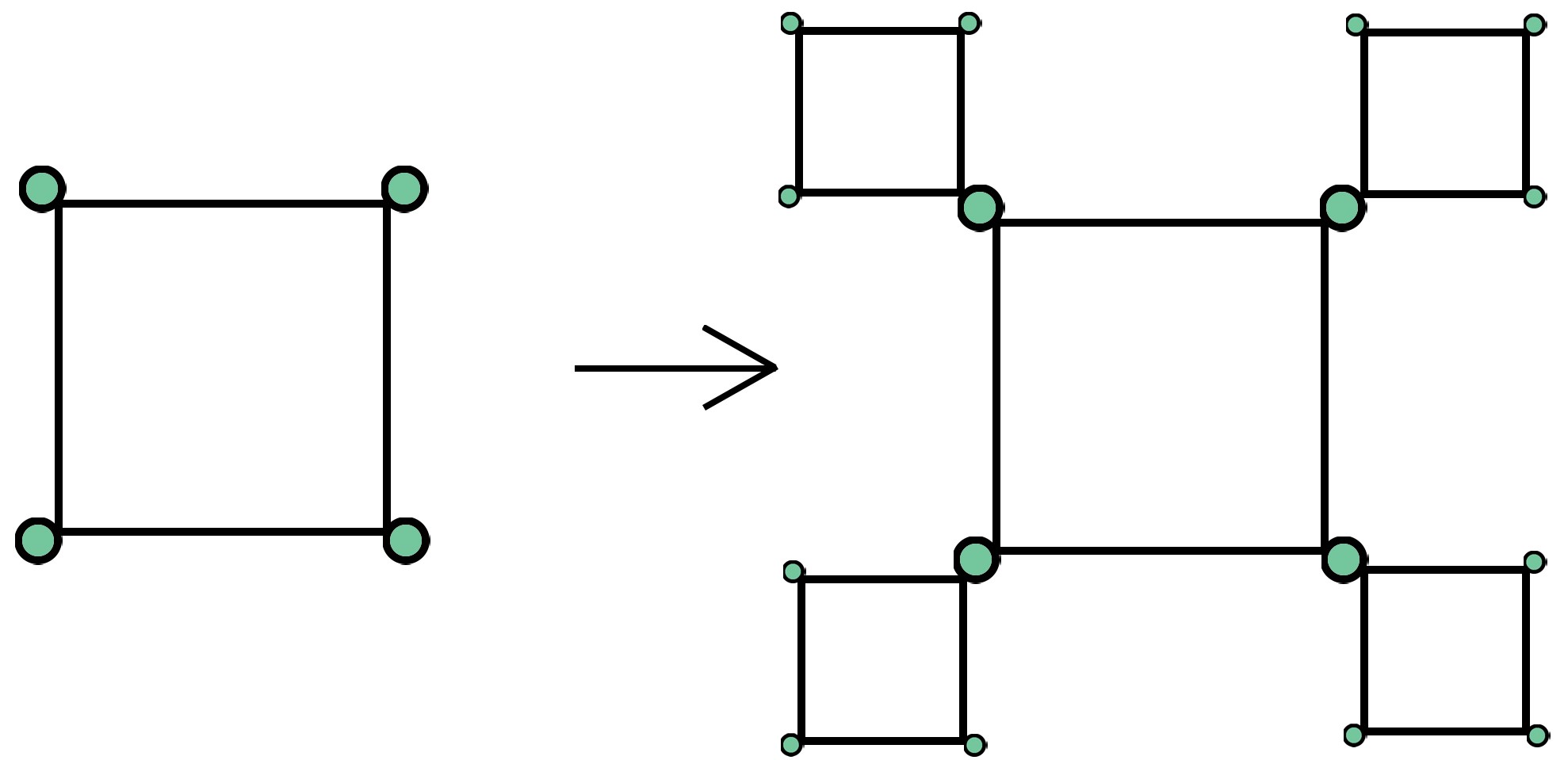}
    \caption{$(GLV)$ is applied on $C_4$.}
    \label{fig:2}
\end{figure}

\textbf{Definition 4.} $G_{(V^{(i+1)},E^{(i+1))}}^{(i,m,n)}$, and $i \geq -1$, is the graph that contains $V^{(i+1)}$ of vertices and $E^{(i+1)}$ of edges. For example; in the case of cycles, that it begins from {$G^{(0,2,4)}$ is the case with $V^{(1)} = 4$, $E^{(1)} = 4$ which is $C_4$, and then by applying the two operations, as in figure \ref{Fig:3}, where they are applied on $C_4$. 
\begin{figure}[h]
    \centering
    \includegraphics[width=0.2\textwidth, height=0.2\textwidth]{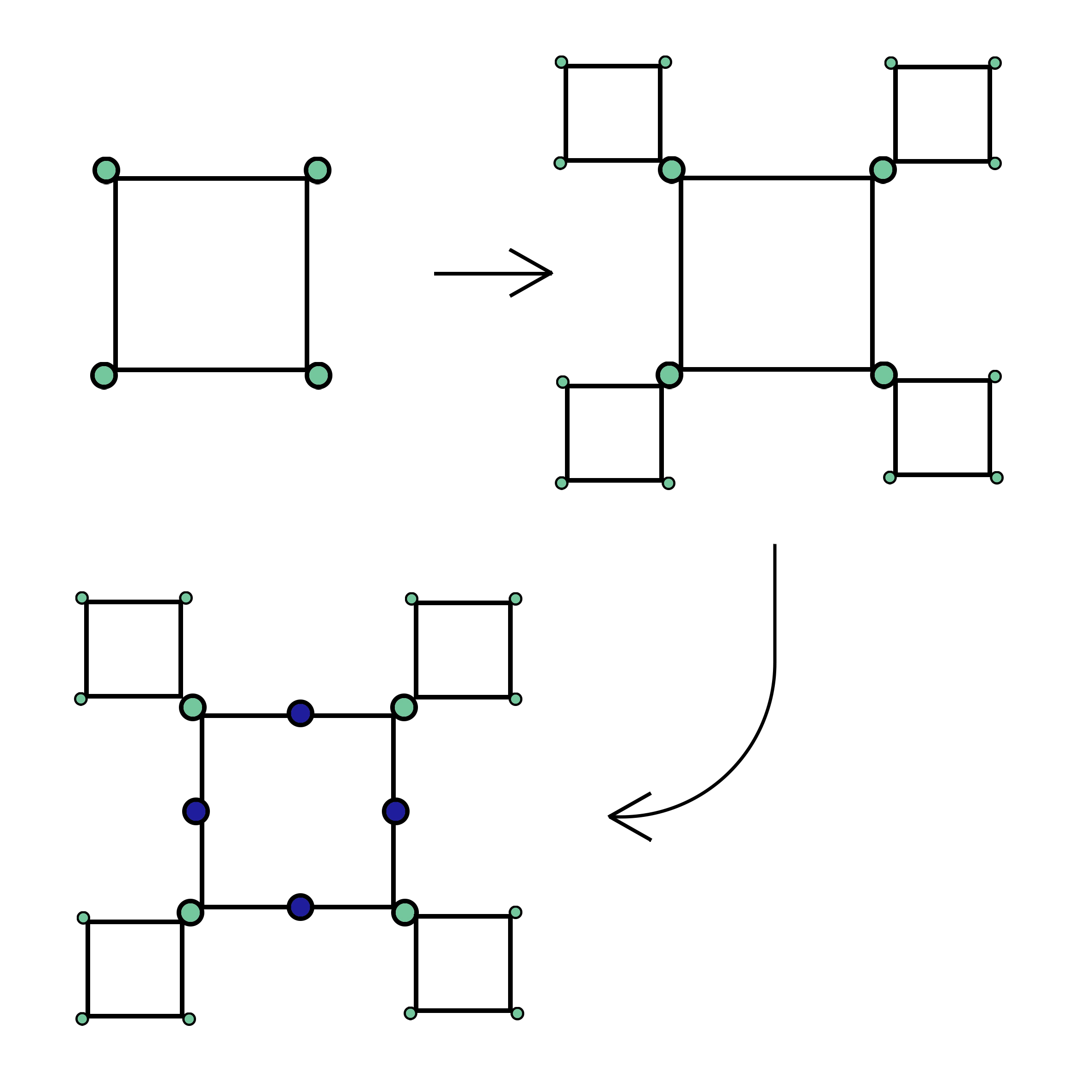}
    \caption{$(EPT)$ and $(GLV)$ is applied on $C_4$ with $m = 2$.}
    \label{Fig:3}
\end{figure}
\begin{table}[h]
    \caption{Table of Notations}
    \tiny
    \vspace{-1.1em}
    \centering
        \begin{tabular}{c c p{0.5\textwidth}} 
        \hline \hline 
            $m$ & $\triangleq$ & The length of the path in the operation of $\xi_{1}$. \\
            $G^{(i,m)}_{C_{n}}$ & $\triangleq$ & The self similar graph based on $C_{n}$ or shortly $G^{(i,m,n)}$ in the section III. \\
            $G^{(i,m)}_{W_{n}}$ & $\triangleq$ & The self similar graph based on $W_{n}$ or shortly $G^{(i,m,n)}$ in the section IV. \\
            $G^{(0,m,n)}$ & $\triangleq$ & The base graph whether $W_{n}$, or $C_{n}$ according to the section. \\
            $G^{(-1,m,n)}$ & $\triangleq$ & Graph with just one vertex and without any edges. \\
            $V^{(i+1)}(G^{(i,m,n)})$, $E^{(i+1)}(G^{(i,m,n)})$ & $\triangleq$ & The number of vertices, and edges in self-similar graph $G^{(i,m,n)}$ in stage $i$. \\
            $H^{(i,m)}$ & $\triangleq$ & The central graph in the $i$ stage in graph $G^{(i,m)}$, which is the result of operating on the $G^{(0,m)}$ graph, by the operation $\xi_{1}$, $i$ times. \\
            $\tau(G^{(i,m,n)})$ & $\triangleq$ & The total number of spanning trees of the graph $G^{(i,m,n)}$. \\
            $\sigma(G^{(i,m,n)})$ & $\triangleq$ & The entropy of the self-similar $G^{(i,m,n)}$ whether for the model based on $W_{n}$ or $C_{n}$ according to the section. \\
            $\bar{\mathcal{A}}(G^{(i,m,n)})$ & $\triangleq$ & The Average clustering coefficient of the self-similar $G^{(i,m,n)}$ whether for the model based on $W_{n}$ or $C_{n}$ according to the section.
            \end{tabular}
        \label{tab:TableOfNotations}
\end{table} 

\newpage

\section{The self-similar (Fractals) graphs based on $C_n$ graphs} \label{sec:CN}

\subsection{The Fractal Growth}

The fractals based on the $C_n$ graphs are constructed by the operations in \textbf{Definition 2, 3}, which is $\xi_{1}$ and  $\xi_{2}$ with given $m$. 
\noindent \textbf{Example 1:} For instance, beginning with $G^{(0)} = C_3$, and $m = 2$.
\begin{table}[h] \label{tab:C4}
  \begin{tabular}
      {|c|c|c|} \hline Stage & Graphs inside & Graph \\
      \hline $G^{(0,2,3)}$ & $C_{3}$ & {
      \includegraphics[width=0.5in]{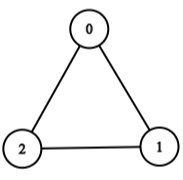}} \\
      $G^{(1,2,3)}$ & $3 G^{(0,2,3)} \cup H^{(1,2)}$ & {
      \includegraphics[width=0.8in]{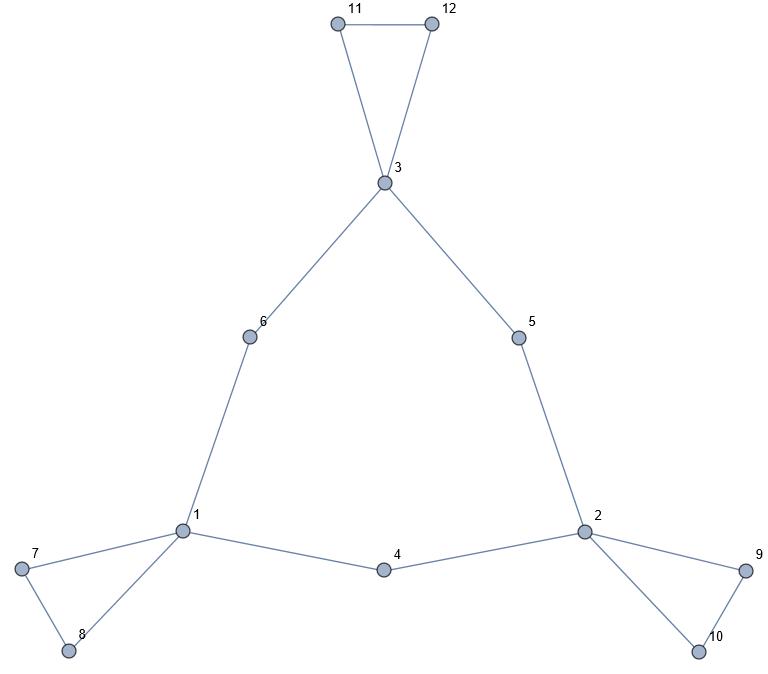}} \\
      $G^{(2,2,3)}$ & $3 G^{(1,2,3)} \cup 3 G^{(0,2,3)} \cup H^{(2,2)}$ & {
      \includegraphics[width=0.8in]{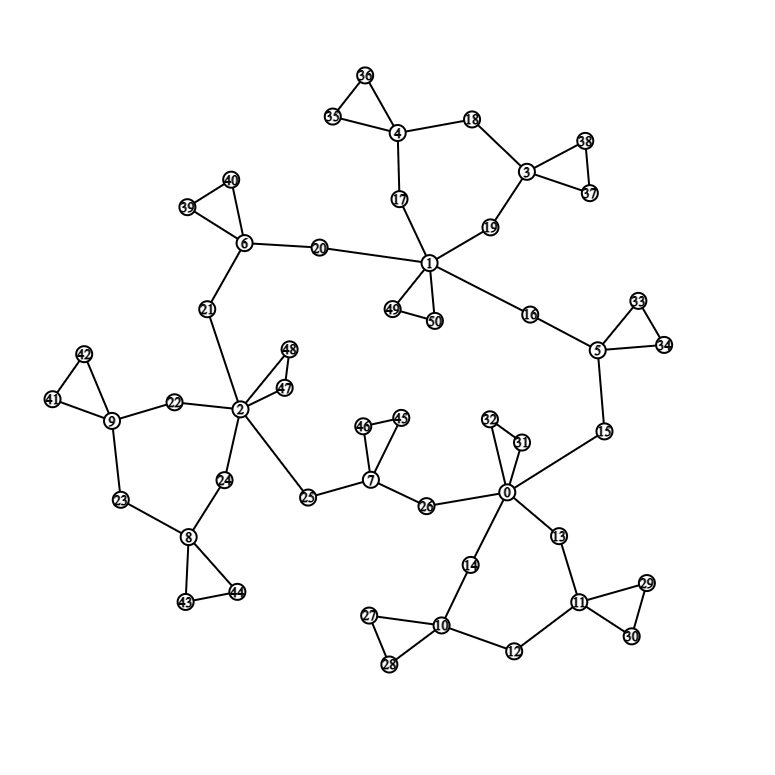}} \\
      \hline
  \end{tabular}
\end{table}
From the previous example, we can observe that $G^{(1,2,3)}$ is $3$ copies of $G^{(0,2,3)}$ and $H^{(1,2)}$, and $G^{(2,2,3)}$ is $3 \times 2^0$ copies of $G^{(1,2,3)}$ and $3$ copies of $G^{(0,2,3)}$ and $H^{(2,2)}$. Therefore, each copy of $G^{(t,m,n)}$ in $G^{(i,m,n)}$ will be transformed to $G^{(t+1,m,n)}$ in $G^{(i+1,m,n)}$, and the number of copies will be preserved, which is illustrated in the following diagram: 
{
\footnotesize
\begin{align*}
    i & \longrightarrow i+1 \\
    {\#}G^{(i-1,m,n)} & \longrightarrow {\#}G^{(i,m,n)} \\
    {\#}G^{(i-2,m,n)} & \longrightarrow {\#}G^{(i-1,m,n)} \\
    {\#}G^{(i-3,m,n)} & \longrightarrow {\#}G^{(i-2,m,n)} \\
    {\#}G^{(i-4,m,n)} & \longrightarrow {\#}G^{(i-3,m,n)} \\
    {\#}G^{(i-5,m,n)} & \longrightarrow {\#}G^{(i-4,m,n)} \\
    &\;\;\vdots \notag \\
    {\#}G^{(1,m,n)} & \longrightarrow {\#}G^{(2,m,n)} \\
    {\#}G^{(0,m,n)} & \longrightarrow {\#}G^{(1,m,n)} \\
    V^{(i)} \subseteq V(H^{(i,m)}) & \longrightarrow {\#}G^{(0,m,n)} \\
    H^{(i,m)} & \longrightarrow H^{(i+1,m)} \\
\end{align*}} \begin{lemma} \label{lemma 1}
    In $G^{(i,m,n)}$, we have ($n$ copies of $G^{(i-1,m,n)}$, $n\times m^0$ copies of $G^{(i-2,m,n)}$, $n\times m^1$ copies of $G^{(i-3,m,n)}$, $n\times m^2$ copies of $G^{(i-4,m,n)}$, ... , $n\times m^{(k-2)}$ copies of $G^{(i-k,m,n)}$, and $H^{(i,m)} = C_{n\times m^{i}}$).
\end{lemma}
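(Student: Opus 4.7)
The plan is an induction on $i$, leveraging the recursive description of the fractal growth laid out immediately before the lemma statement (the shift diagram for the $\#G^{(t,m,n)}$ rows and the central graph row).

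For the base case $i=1$, starting from $G^{(0,m,n)}=C_n$, the operation $\xi_1$ subdivides each of the $n$ edges of $C_n$ into a path of $m$ edges, producing the central cycle $H^{(1,m)}=C_{nm}=C_{n\cdot m^1}$. Then $\xi_2$ attaches a fresh copy of $G^{(0,m,n)}$ at each of the $n$ distinguished vertices, contributing exactly $n$ copies of $G^{(0,m,n)}$; this matches the lemma's $k=1$ slot.

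For the inductive step, assume the lemma holds for $G^{(i-1,m,n)}$. From the growth diagram, one application of the composite operation $(\xi_1,\xi_2)$ promotes each copy of $G^{(t,m,n)}$ already living inside $G^{(i-1,m,n)}$ to a copy of $G^{(t+1,m,n)}$ inside $G^{(i,m,n)}$, with multiplicity preserved. Plugging in the inductive hypothesis therefore produces $n$ copies of $G^{(i-1,m,n)}$ (arising from the $n$ copies of $G^{(i-2,m,n)}$ at stage $i-1$) together with $n\,m^{k-2}$ copies of $G^{(i-k,m,n)}$ for each $k\ge 2$ (arising from the $n\,m^{k-2}$ copies of $G^{(i-1-k,m,n)}$ at the previous stage). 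For the central cycle itself, applying $\xi_1$ once more subdivides every one of the $n\,m^{i-1}$ edges of $H^{(i-1,m)}=C_{n\,m^{i-1}}$ into $m$ edges, which yields $C_{n\,m^{i}}=H^{(i,m)}$, completing this part of the formula.

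The only remaining ingredient is the count of freshly attached $G^{(0,m,n)}$ copies at stage $i$; these are introduced by $\xi_2$ at the distinguished vertex set $V^{(i)}\subseteq V(H^{(i,m)})$. A direct read-off of the iterative construction shows that $|V^{(i)}|$ is exactly the value demanded by the missing term of the lemma, and the induction then closes. I expect the main obstacle to be precisely this bookkeeping step: the index shift of the pre-existing subgraph copies is transparent once the growth diagram is accepted, but formally identifying $V^{(i)}$ at every stage, and verifying that its cardinality matches the leftover term in the formula, is where the nontrivial combinatorial accounting lies.
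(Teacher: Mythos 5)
Your proposal is correct and follows essentially the same route as the paper's own proof: an induction on $i$ in which every copy of $G^{(t,m,n)}$ present at the previous stage is promoted to a copy of $G^{(t+1,m,n)}$ with multiplicity preserved, the central cycle $H^{(i-1,m)}=C_{n\,m^{i-1}}$ is subdivided by $\xi_1$ into $H^{(i,m)}=C_{n\,m^{i}}$, and the fresh copies of $G^{(0,m,n)}$ are accounted for by the vertices newly created inside the central graph. The one step you flag as unfinished bookkeeping --- verifying that the number of freshly attached base copies matches the last term of the formula --- is exactly the step the paper's proof also only asserts rather than computes, so you have lost nothing relative to the published argument.
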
 
\begin{proof}
    Lemma \ref{lemma 1} is true when (i = 1, and i = 2), which is in example 1. Assume that in $G^{(i,m,n)}$, we have ($n$ copies of $G^{(i-1,m,n)}$, $n\times m^0$ copies of $G^{(i-2,m,n)}$, $n\times m^1$ copies of $G^{(i-3,m,n)}$, $n\times m^2$ copies of $G^{(i-4,m,n)}$, ... , $n\times m^{(k-2)}$ copies of $G^{(i-k,m,n)}$, and $H^{(i,m)} = C_{n\times m^{i}}$), and we want to prove that in $G^{(i+1,m,n)}$, we have ($n$ copies of $G^{(i,m,n)}$, $n\times m^0$ copies of $G^{(i-1,m,n)}$, $n\times m^1$ copies of $G^{(i-2,m,n)}$, $n\times m^2$ copies of $G^{(i-3,m,n)}$, ... , $n\times m^{(k-1)}$ copies of $G^{(i+1-k,m,n)}$, and $H^{(i+1,m)} = C_{n\times m^{i+1}}$) when we perform the operations of $\xi_{1}$ and $\xi_{2}$, we can see that each copy of $G^{(i,m,n)}$ will be converted to $G^{(i+1,m,n)}$. 

    Since each copy of $G^{(t,m,n)}$ in $G^{(i,m,n)}$ will be transformed to $G^{(t+1,m,n)}$ in $G^{(i+1,m,n)}$, and the number of copies will be preserved, and a subset of vertices in $H^{(i,m)}$ will be transformed to $G^{(0,m,n)}$, and those subsets are the vertices that have been added to the edges in the operation of $\xi_{1}$, and the $H^{(i,m)}$ will be transformed to $H^{(i+1,m)}$.

    Therefore, we can assure that in $G^{(i+1,m,n)}$ that contains ($n$ copies of $G^{(i,m,n)}$, $n\times m^0$ copies of $G^{(i-1,m,n)}$, $n\times m^1$ copies of $G^{(i-2,m,n)}$, $n\times m^2$ copies of $G^{(i-3,m,n)}$, ... , $n\times m^{(k-2)}$ copies of $G^{(i+1-k,m,n)}$, and $H^{(i+1,m)} = C_{n\times m^{(i+1)}}$) when we perform the operations of $\xi_{1}$ and $\xi_{2}$, we can see that each copy of $G^{(i,m,n)}$ will be converted to $G^{(i+1,m,n)}$. Moreover, we have central graph $H^{(i,m)}$ which is $C_{n\times m^{i}}$ will grow $H^{(i+1,m)}$ which is $C_{n\times m^{i+1}}$, so it will grow in this way using the operations in definition 2, 3, to be the graph in the definition 4. 
\end{proof}
\textbf{Example 2:} In the figure \ref{Fig. 4} the first iteration for $C_3$ by applying those operations, therefore, $G_{(V^{(2)},E^{(2)})}^{(1,2,3)}$, where $G_{(V^{(1)},E^{(1)})}^{(0,2,3)} = C_3$ with considering $V^{(1)} = 3$ and $E^{(1)} = 3$, the figure \ref{Fig. 5}, the second iteration for $C_3$ which is $G_{(V^{(3)},E^{(3)})}^{(2,2,3)}$, thus from the observation: $V^{(2)} = 12$ and $E^{(2)} = 15$, and $V^{(3)} = 51$ and $E^{(3)} = 66$. Those two operations will yield that, for every existing vertex in the graph, it gives $n$ edges and $(n-1)$ vertices in the next iteration. Moreover, for every existing edge in the graph, it gives $m$ edges and $(m-1)$ vertices. 

\begin{figure}[h]
    \centering
    \includegraphics[width=0.2\textwidth, height=0.2\textwidth]{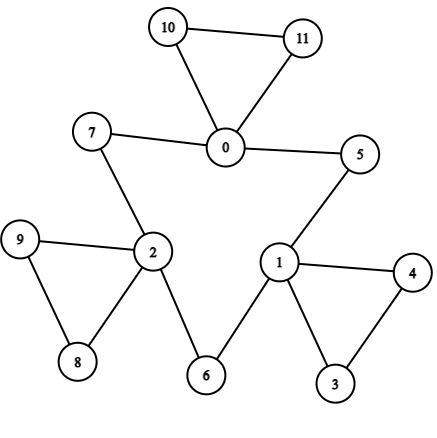}
    \caption{The first iteration for $C_3$ where $m = 2$.}
    \label{Fig. 4}
\end{figure}
\begin{figure}[h]
    \centering
    \includegraphics[width=0.3\textwidth, height=0.3\textwidth]{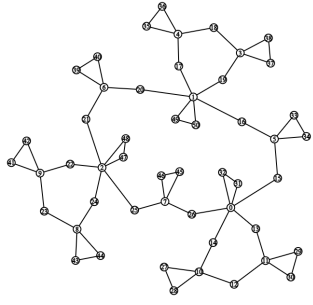}
    \caption{The second iteration for $C_3$ where $m = 2$.}
    \label{Fig. 5}
\end{figure}

\begin{lemma} \label{lemma2}
    The number of vertices and edges in any fractals based on $C_n$ graphs that were built previously can be given by the following two recursive formulas:
\begin{gather*}
    \vert{V^{(i)}}\vert = (n)\times {\vert}V^{(i-1)}\vert + (m-1)\times {\vert}E^{(i-1)}\vert \\
    {\vert}{E^{(i)}}{\vert} = (n)\times {\vert}V^{(i-1)}{\vert} + (m)\times {\vert}E^{(i-1)}{\vert}       
\end{gather*}
And by solving them together, it is easy to separate them as follows:
\begin{gather} \label{verteq}
    {\vert}{V^{(i)}}{\vert} = (n+m)\times {\vert}V^{(i-1)}{\vert} - (n)\times {\vert}V^{(i-2)}{\vert} \\
    {\vert}{E^{(i)}}{\vert} = (n+m)\times {\vert}E^{(i-1)}{\vert} - (n)\times {\vert}E^{(i-2)}{\vert}   
\end{gather}
Where the $n$ indicates the original graph vertices and the $m$ is the path length in the $\xi_{1}$. 
\end{lemma}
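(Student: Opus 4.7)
The plan is to first establish the two coupled recursions by a direct count of how each of the operations $\xi_{1}$ and $\xi_{2}$ contributes vertices and edges, and then to decouple them via a short linear-algebraic argument.

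For the coupled recursions, I will view the two operations as acting in parallel on $G^{(i-1,m,n)}$, so that $\xi_{1}$ rewrites only the edges present before $\xi_{2}$ is applied and $\xi_{2}$ attaches only at the vertices present before $\xi_{1}$ is applied. Under this convention, $\xi_{1}$ replaces each of the $|E^{(i-1)}|$ old edges by a path of length $m$, contributing $(m-1)$ internal vertices and a net of $(m-1)$ edges per old edge, while $\xi_{2}$ identifies each of the $|V^{(i-1)}|$ old vertices with one vertex of a fresh copy of $C_{n}$, contributing $(n-1)$ new vertices and $n$ new edges per old vertex. Adding the original counts to these two contributions gives exactly $|V^{(i)}| = |V^{(i-1)}| + (n-1)|V^{(i-1)}| + (m-1)|E^{(i-1)}| = n|V^{(i-1)}| + (m-1)|E^{(i-1)}|$ and $|E^{(i)}| = |E^{(i-1)}| + n|V^{(i-1)}| + (m-1)|E^{(i-1)}| = n|V^{(i-1)}| + m|E^{(i-1)}|$, which are the claimed coupled recurrences.

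For the decoupling, I will set $\mathbf{x}_{i} = \bigl(|V^{(i)}|,\, |E^{(i)}|\bigr)^{T}$ and rewrite the system above as $\mathbf{x}_{i} = A\,\mathbf{x}_{i-1}$ with $A = \bigl(\begin{smallmatrix} n & m-1 \\ n & m \end{smallmatrix}\bigr)$. A one-line computation gives $\mathrm{tr}(A) = n+m$ and $\det(A) = nm - n(m-1) = n$, so the characteristic polynomial of $A$ is $\lambda^{2} - (n+m)\lambda + n$. Applying the Cayley--Hamilton theorem then yields $A^{2} = (n+m)A - nI$, whence $\mathbf{x}_{i} = A^{2}\mathbf{x}_{i-2} = (n+m)\,A\mathbf{x}_{i-2} - n\,\mathbf{x}_{i-2} = (n+m)\mathbf{x}_{i-1} - n\mathbf{x}_{i-2}$; reading off the two components delivers the announced second-order recurrences for $|V^{(i)}|$ and $|E^{(i)}|$ separately.

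The only subtle point I anticipate is justifying the parallel-application convention in the first step: if instead $\xi_{2}$ acted on the graph already expanded by $\xi_{1}$, the $(m-1)|E^{(i-1)}|$ fresh internal vertices would each receive an extra copy of $C_{n}$ and the formulas would pick up an additional $(n-1)(m-1)|E^{(i-1)}|$ term. I will resolve this by appealing to the construction convention already made explicit in Lemma \ref{lemma 1}---new copies of $G^{(0,m,n)}$ are attached only at the vertices carried over from stage $i-1$, not at the internal vertices freshly created by $\xi_{1}$---so that the coupled recurrences and hence their decoupled forms hold exactly as stated.
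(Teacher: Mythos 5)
Your proof is correct and rests on the same counting the paper only sketches informally at the end of Example~2 (each existing vertex contributes $n-1$ new vertices and $n$ new edges via $\xi_{2}$, each existing edge contributes $m-1$ new vertices and a net of $m-1$ new edges via $\xi_{1}$); the paper in fact states this lemma without any formal proof and merely asserts that the coupled system can be "separated." Your Cayley--Hamilton derivation of the second-order recurrences and your explicit handling of the convention that $\xi_{2}$ attaches new copies of $C_{n}$ only at the vertices carried over from stage $i-1$ --- the convention forced by the paper's own counts $V^{(2)}=12$, $V^{(3)}=51$ for $n=3$, $m=2$ --- are both sound and supply details the paper leaves implicit.
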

Therefore, $V^{(1)} = 3$, $E^{(1)} = 3$, and we will take further initial step that $V^{(0)} = 1$, $E^{(0)} = 0$, those considered as a constants. The terms of the first recursive formula which is for the number of vertices in stage $i$ can be obtained from this equation (Which is the equation form for the recursive relation in the equation \ref{verteq}): 

\small
\begin{equation} \label{eq3.3}
        {\vert} V^{(i)}{\vert} = \frac{2^{-i-1} \left((\text{$\alpha_1$}+\varphi ) (\alpha_2 -\varphi )^i+(\varphi -\text{$\alpha_1$}) (\varphi +\alpha_2 )^i\right)}{\varphi }
\end{equation}

\normalsize
\noindent Where the constants are: $\varphi = \sqrt{-4 n + (m + n)^2}$, 
$\alpha_2 = m+n$, $\alpha_1 = m-n$, ($i > 0$, $m \geq 2$, $n \geq 3$).

\subsection{The Number of Spanning Tree for self-similar (Fractal) graphs based on $C_n$ graphs}

\begin{theorem}
    The number of spanning trees for any $C_{n}$ in any stage $i \geq 0$ that is constructed in the same described way in the paper, can be given by the following equation:
    \begin{equation} \label{eq3.4}
        \tau(G^{(i,m,n)}) = (n^{\sum^{i}_{j=0}{{\vert}V^{(j)}{\vert}}})(m^{\sum^{i}_{j=0}{(i-j)\times{\vert}V^{(j)}{\vert}}})
    \end{equation}
\end{theorem}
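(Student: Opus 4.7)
The plan is induction on $i$, with the base $i=0$ immediate since $G^{(0,m,n)}=C_{n}$ has $\tau(C_{n})=n=n^{V^{(0)}}m^{0}$ (using the convention $V^{(0)}=1$). For the inductive step, I would exploit that, by construction, $G^{(i+1,m,n)}$ is obtained from $G^{(i,m,n)}$ by simultaneously attaching a fresh copy of $C_{n}$ to every old vertex via a single shared vertex ($\xi_{2}$) and replacing every old edge by a path of length $m$ ($\xi_{1}$).

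The first operation interacts with the rest of the graph only through cut vertices, so the standard cut-vertex factorization of the spanning-tree polynomial multiplies $\tau$ by $\tau(C_{n})^{V^{(i+1)}}=n^{V^{(i+1)}}$. For the edge-subdivision, every intermediate vertex on a replacement path has degree $2$, which forces each such path to contribute either all $m$ of its edges to a spanning tree or exactly $m-1$ of them (dropping two or more would either isolate an intermediate or detach a floating interior segment). Counting tree edges then shows that of the $E^{(i+1)}$ replacement paths exactly $V^{(i+1)}-1$ must be used in full; these complete paths correspond bijectively to the edges of a spanning tree of $G^{(i,m,n)}$, and each of the remaining $E^{(i+1)}-V^{(i+1)}+1$ paths independently offers $m$ choices for which edge to drop. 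Combining yields the multiplicative recursion
\begin{equation*}
\tau\bigl(G^{(i+1,m,n)}\bigr) \;=\; \tau\bigl(G^{(i,m,n)}\bigr)\, n^{V^{(i+1)}}\, m^{E^{(i+1)}-V^{(i+1)}+1}.
\end{equation*}

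To close the induction, set $A_{i}=\sum_{j=0}^{i}V^{(j)}$ and $B_{i}=\sum_{j=0}^{i}(i-j)V^{(j)}$; a direct telescoping gives $A_{i+1}=A_{i}+V^{(i+1)}$ and $B_{i+1}=B_{i}+A_{i}$. The $n$-exponent from the recursion matches $A_{i+1}$ automatically, and matching the $m$-exponent reduces to the auxiliary identity $A_{i}=E^{(i+1)}-V^{(i+1)}+1$, i.e.\ $A_{i}$ is exactly the cyclomatic number of $G^{(i,m,n)}$. I would prove this by a short secondary induction using Lemma~\ref{lemma2}: the base $A_{0}=1=n-n+1$ is trivial, and for the step one computes
\begin{equation*}
E^{(i+1)}-V^{(i+1)}+1 \;=\; \bigl[nV^{(i)}+mE^{(i)}\bigr]-\bigl[nV^{(i)}+(m-1)E^{(i)}\bigr]+1 \;=\; E^{(i)}+1 \;=\; A_{i-1}+V^{(i)} \;=\; A_{i}.
\end{equation*}

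The one genuinely delicate part of the argument is the spanning-tree count for the edge-subdivision operation: one must carefully justify both the local constraint that each replacement path contributes either $m$ or $m-1$ edges to the tree, and the global bijection between spanning trees of the subdivided graph and pairs (spanning tree of $G^{(i,m,n)}$, choice of a dropped edge on each non-tree edge). Once this subdivision lemma is in place, the rest of the proof is purely algebraic manipulation of the vertex/edge recursions in Lemma~\ref{lemma2} together with the telescoping identities for $A_{i}$ and $B_{i}$, and the closed form $\tau(G^{(i,m,n)})=n^{A_{i}}m^{B_{i}}$ follows.
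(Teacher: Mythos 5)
Your proposal is correct, and I checked your recursion $\tau(G^{(i+1,m,n)})=\tau(G^{(i,m,n)})\,n^{V^{(i+1)}}m^{E^{(i+1)}-V^{(i+1)}+1}$ against the paper's worked values: it reproduces $3^{4}\cdot 2$, $3^{16}\cdot 2^{5}$ and $3^{67}\cdot 2^{21}$ for $n=3$, $m=2$. However, you take a genuinely different route from the paper. The paper argues globally: using Lemma \ref{lemma 1} it unwinds the self-similar decomposition until $G^{(i,m,n)}$ is exhibited as a union of edge-disjoint cycles meeting only at cut vertices --- $\vert V^{(i-j)}\vert$ copies of $C_{nm^{j}}$ for $j=0,\dots,i$ --- and then multiplies $\tau$ over these blocks, which is exactly the product form $\prod_{j}\tau(C_{nm^{j}})^{\vert V^{(i-j)}\vert}$ shown in Example 3. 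You instead work one iteration at a time: cut-vertex factorization handles $\xi_{2}$, a subdivision lemma ($\tau$ gains a factor $m^{\text{cyclomatic number}}$ when every edge is replaced by an $m$-path) handles $\xi_{1}$, and the induction is closed by the identity $E^{(i+1)}-V^{(i+1)}+1=\sum_{j=0}^{i}\vert V^{(j)}\vert$ together with the telescoping $B_{i+1}=B_{i}+A_{i}$. What the paper's approach buys is the explicit block structure of the limit object, which it reuses elsewhere; what yours buys is rigor and portability --- you never need Lemma \ref{lemma 1}, the exponent of $m$ is \emph{explained} (it is the accumulated cyclomatic number) rather than read off from examples, and the same two local lemmas immediately adapt to the $W_{n}$ model, where the blocks are no longer all cycles and the paper's global unwinding becomes harder to justify. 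The two genuinely delicate points you flag (the $m$-or-$(m-1)$ edge dichotomy on each replacement path, and the bijection with pairs consisting of a spanning tree of $G^{(i,m,n)}$ and a choice of dropped edge on each non-tree path) are exactly the right things to prove carefully, and your sketch of them is sound.
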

\begin{proof}
    By using Lemma \ref{lemma 1}, and \ref{lemma2}; we can assure that we have ($n$ copies of $G^{(i-1,m,n)}$, $n\times m^0$ copies of $G^{(i-2,m,n)}$, $n\times m^1$ copies of $G^{(i-3,m,n)}$, $n\times m^2$ copies of $G^{(i-4,m,n)}$, ... , $n\times m^{(k-2)}$ copies of $G^{(i-k,m,n)}$, and $H^{(i,m)} = C_{n\times m^{i}}$) in the $G^{(i,m,n)}$ graph, and also, the $G^{(i-1,m,n)}$ contains similarly, ($n$ copies of $G^{(i-2,m,n)}$, $n\times m^0$ copies of $G^{(i-3,m,n)}$, $n\times m^1$ copies of $G^{(i-4,m,n)}$, $n\times m^2$ copies of $G^{(i-5,m,n)}$, ... , $n\times m^{(k-3)}$ copies of $G^{(i-k-1,m,n)}$, and $H^{(i-1,m)} = C_{n\times m^{i-1}}$). 
    
    Therefore, recursively, we will have the $G^{(0,m,n)}$ number of times, that is equivalent to ${\vert}V^{(i)}{\vert}$, so $\tau(G^{(0,m,n)}) = \tau(C_{(n)}) = n$, as it repeated number of times and it is known, so $(\tau(G^{(0,m,n)}))^{{\vert}V^{(i)}{\vert}} = \tau(C_{(n)})^{{\vert}V^{(i)}{\vert}} = (n)^{{\vert}V^{(i)}{\vert}}$, and the central graph which is growing in the way that $H^{(i,m)} = C_{(n\times m^i)}$ the number of spanning tree is $\tau(H^{(i,m)}) = \tau(C_{n\times m^{i}})^{{\vert}V^{(i-1)}{\vert}} = (n\times m^{i})^{{\vert}V^{(i-1)}{\vert}} = n^{{\vert}V^{(i-1)}{\vert}}m^{i\times{\vert}V^{(i-1)}{\vert}}$, Finally, by having $k$ of $H^{i,m}$ that is according to lemma \ref{lemma 1} is $H^{(i,m)}$, $H^{(i-1,m)}$, $H^{(i-2,m)}$, ... , $H^{(1,m)}$, hence, we well have summation in the power as: 
    \begin{equation} \label{eg3.5}
        \tau(G^{(i,m,n)}) = n^{\sum^{i}_{j=0}{{\vert}V^{(j)}{\vert}}}m^{\sum^{i}_{j=0}{(i-j)\times{\vert}V^{(j)}{\vert}}}
    \end{equation}
    And by executing the summations, using \ref{eq3.3}, we got: 
    \small
    \begin{equation*}
        \sum^{i}_{j=0}{{\vert}V^{(j)}{\vert}} = {\frac{2^{-i} \left(-n (\alpha_2 -\varphi )^i+n (\varphi +\alpha_2 )^i+2^i \varphi \right)}{\varphi }}
    \end{equation*}
    \begin{equation*}
        \begin{split}
            \sum^{i}_{j=0}{(i-j)\times{\vert}V^{(j)}{\vert}} & = \frac{1}{(m-1) \varphi} \\
            & {\times}m^{ \varphi  (i m-i-n)} \\
            & {\times}m^{2^{-i-1} (\alpha_2 -\varphi )^i \left(m n+n^2+n \varphi -2 n\right)} \\
            & {\times}m^{2^{-i-1} (\varphi +\alpha_2 )^i \left(-m n-n^2+n \varphi +2 n\right)}
        \end{split}
    \end{equation*}
\end{proof}

\normalsize
\noindent \textbf{Example 3:} Taking $n = 3$, and $m = 2$ as in \cite{ELATIK2021106117}
\begin{equation} \label{eq.3.6}
    \begin{split}
        \tau\left(G_{(C_{3})}^{\left(i,2\right)}\right) & = 3^{-\frac{3 \left(\frac{1}{2} \left(5-\sqrt{13}\right)\right)^i}{\sqrt{13}}+\frac{3 \left(\frac{1}{2} \left(\sqrt{13}+5\right)\right)^i}{\sqrt{13}}+1} \\
        & {\times} 2^{\frac{3 \left(\sqrt{13}+3\right) 2^{-i-1} \left(5-\sqrt{13}\right)^i}{\sqrt{13}}} \\
        & {\times} 2^{\frac{3 \left(\sqrt{13}-3\right) 2^{-i-1} \left(\sqrt{13}+5\right)^i}{\sqrt{13}}} \\
        & {\times} 2^{(-3 + i)}
    \end{split}
\end{equation}

\noindent In this case, the recursive formula for ${\vert}V^{(i)}{\vert}$ will be: 
\begin{align*}
        {\vert}{V^{(i)}}{\vert} &= (5){{\vert}V^{(i-1)}{\vert}} - (3){{\vert}V^{(i-2)}{\vert}} \\
        {\vert}V^{(i)}{\vert} &= \frac{2^{-i-1} \left((\text{$\alpha $1}+\varphi ) (\alpha_2 -\varphi )^i+(\varphi -\text{$\alpha $1}) (\varphi +\alpha_2 )^i\right)}{\varphi}  
\end{align*}

\noindent With substituting $\varphi$, and $\alpha_2$ by the n and m. 

The first few terms are: ${(1, 3, 12, 51, 219, 942, ...)}$, the number of spanning trees in this case are given by the following: 

\begin{equation*}
\begin{split}
\tau\left(G^{\left(1,2,3\right)}\right) & = 3^{{\vert}V^{1}{\vert}+{\vert}V^0{\vert}}\times2^{{0\times {\vert}V^{1}{\vert} + 1 \times {\vert}V^0{\vert}}} \\
 & = 3^{3+1}\times2^1 \\ 
 & = 3^4\times2^1
\end{split}
\end{equation*}
\begin{equation*}
\begin{split}
\tau\left(G^{\left(2,2,3\right)}\right) & = 3^{{\vert}V^2{\vert}+{\vert}V^1{\vert}+{\vert}V^0{\vert}}\times2^{{\vert}V^1{\vert}+2{\vert}V^0{\vert}} \\
 & = 3^{12+3+1}\times2^{3+2} \\
 & = 3^{16}\times2^5
\end{split}
\end{equation*}
\begin{equation*}
\begin{split}
\tau\left(G^{\left(3,2,3\right)}\right) & = 3^{{\vert}V^3{\vert}+{\vert}V^2{\vert}+{\vert}V^1{\vert}+{\vert}V^0{\vert}}\times2^{{\vert}V^2{\vert}+2{\vert}V^1{\vert}+3{\vert}V^0{\vert}} \\ 
 & = 3^{51+12+3+1}\times2^{12+6+3} \\
 & = 3^{67}\times2^{21}
\end{split}
\end{equation*}

Which can be written in another way that will illustrate the growth more clearly, as the following: 
\begin{equation*}
\begin{split}
\tau\left(G^{\left(1,2,3\right)}\right) & = {\tau\left(C_3\right)}^3\times\tau\left(C_6\right) \\
 & = 3^3\times6 \\
 & = 3^4\times2^1
\end{split}
\end{equation*}
\begin{equation*}
\begin{split}
\tau\left(G^{\left(2,2,3\right)}\right) & = {\tau\left(C_3\right)}^{12}\times{\tau\left(C_6\right)}^3\times\tau\left(C_{12}\right) \\
 & = 3^{12}\times6^3\times12 \\
 & = 3^{16}\times2^{5}
\end{split}
\end{equation*}
\begin{equation*}
\begin{split}
\tau\left(G^{\left(3,2,3\right)}\right) & = {\tau\left(C_3\right)}^{12}\times{\tau\left(C_6\right)}^3\times\tau\left(C_{12}\right) \\
 & = 3^{12}\times6^3\times12 \\
 & = 3^{67}\times2^{21}
\end{split}
\end{equation*}

\begin{table}[ht]
\caption{Comparing the results of $n = 3$, and $m = 2$} % title of Table
\centering % used for centering table
\begin{tabular}{c c c c} % centered columns (4 columns)
\hline\hline %inserts double horizontal lines
Iteration & \cite{ELATIK2021106117} & Comp. Res. & Our Res. \\ [0.5ex] % inserts table
%heading
\hline % inserts single horizontal line
1 & $3^4{\times}2^1$ & $3^4{\times}2^1$ & $3^4{\times}2^1$ \\ % inserting body of the table
2 & $3^{16}{\times}2^{5}$ & $3^{16}{\times}2^{5}$ & $3^{16}{\times}2^{5}$ \\
3 & $3^{67}{\times}2^{21}$ & $3^{67}{\times}2^{21}$ & $3^{67}{\times}2^{21}$ \\
4 & $3^{286}{\times}2^{88}$ & $3^{286}{\times}2^{88}$ & $3^{286}{\times}2^{88}$ \\ [1ex] % [1ex] adds vertical space
\hline %inserts single line
\end{tabular}
\label{table:results for n = 3 and m = 2} % is used to refer this table in the text
\end{table} The computational results is performed using a (\textit{Python}). 

\subsection{The Entropy in self-similar graphs (Fractals) based on $C_n$ graphs}

The entropy of a spanning tree which is known as the asymptotic complexity of the fractals is calculated explicitly from the presentation and the study of \cite{10.1214/aop/1176989121, Mokhlissi2018, lyons_2005}, it is an interesting term with a finite number that describes the network model. It can be given using the following formula: 
\begin{equation}\label{eq3.7}
    \lim_{i\to \infty}\left(\frac{ln(\tau(G^{(i,m,n)}))}{{\vert}V^{(i)}{\vert}}\right)
\end{equation}
\begin{corollary}
    Using the formula for spanning trees in equation \ref{eg3.5}, and having equation \ref{eq3.3} for the number of vertices, therefore, we get this: 

\small
\begin{equation}
    \sigma(G^{(i,m,n)}) = \frac{2 (m-1) n \ln (n)-n \ln (m) (-\varphi +\alpha_2 -2)}{(m-1) (\varphi -\text{$\alpha_1$})}
\end{equation}
\end{corollary}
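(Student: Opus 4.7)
The plan is to start from the definition $\sigma = \lim_{i\to\infty} \ln\tau(G^{(i,m,n)})/|V^{(i)}|$ and use the closed form for $\tau$ supplied by the preceding theorem. Writing the logarithm out gives
\begin{equation*}
\ln\tau(G^{(i,m,n)}) = S_1(i)\ln n + S_2(i)\ln m, \qquad S_1(i)=\sum_{j=0}^{i}|V^{(j)}|,\;\; S_2(i)=\sum_{j=0}^{i}(i-j)\,|V^{(j)}|,
\end{equation*}
so the problem reduces to evaluating the two ratios $S_1(i)/|V^{(i)}|$ and $S_2(i)/|V^{(i)}|$ in the large-$i$ limit. The recurrence of Lemma \ref{lemma2} has characteristic roots $\lambda=(\alpha_2+\varphi)/2$ and $\mu=(\alpha_2-\varphi)/2$; since $m\geq 2$, $n\geq 3$ make $\varphi>0$ and $\lambda>|\mu|$, the explicit formula (\ref{eq3.3}) gives $|V^{(i)}| \sim C\lambda^{i}$ with $C=(\varphi-\alpha_1)/(2\varphi)$.

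Next I would evaluate the two limits using the dominant-root asymptotic. A standard geometric-series estimate yields $S_1(i)/|V^{(i)}| \to \lambda/(\lambda-1)$. For the second ratio, changing index to $k=i-j$ and factoring out $\lambda^i$ reduces the sum to $\sum_{k\geq 0} k\lambda^{-k}=\lambda/(\lambda-1)^2$; the contributions from the subdominant root $\mu$ are negligible relative to $\lambda^{i}$. Combining the two pieces gives the compact intermediate form
\begin{equation*}
\sigma(G^{(i,m,n)}) = \frac{\lambda\,\ln n}{\lambda-1} + \frac{\lambda\,\ln m}{(\lambda-1)^2}.
\end{equation*}

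The remainder is purely algebraic: one must rewrite this expression in the variables $\varphi$, $\alpha_1$, $\alpha_2$, $m$, $n$ so as to match the stated denominator $(m-1)(\varphi-\alpha_1)$. I expect the key observation to be two quadratic identities extracted from $\varphi^2=(m+n)^2-4n$. First, direct expansion produces $(\varphi-\alpha_1)(\alpha_2+\varphi)=2n(\alpha_2+\varphi-2)$, which converts $\lambda/(\lambda-1)=(\alpha_2+\varphi)/(\alpha_2+\varphi-2)$ into $2n/(\varphi-\alpha_1)$ and so yields the $\ln n$ term $2n\ln n/(\varphi-\alpha_1)$. Second, the companion identity $(\alpha_2+\varphi-2)(\varphi-\alpha_2+2)=\varphi^2-(\alpha_2-2)^2=4(m-1)$, together with the first identity, converts $\lambda/(\lambda-1)^2$ into $n(\varphi-\alpha_2+2)/[(m-1)(\varphi-\alpha_1)]$. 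Placing both fractions over the common denominator $(m-1)(\varphi-\alpha_1)$ reproduces the corollary.

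The analytic ingredients (linear recurrence asymptotics, the geometric series, and its derivative) are routine; the main obstacle is isolating the two simplifying identities above. Without them the natural form $\lambda/(\lambda-1)+ \lambda\ln m/[(\lambda-1)^2 \ln n]$ looks unlike the target, and brute-force rationalization of $\varphi$ in the denominator is messy. Once the two identities are in hand, the rewrite is a one-line substitution.
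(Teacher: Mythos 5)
Your derivation is correct and is, in substance, the argument the paper leaves implicit: the corollary is stated with no worked proof beyond the instruction to combine the spanning-tree formula with the vertex formula and pass to the limit in the entropy definition. Your dominant-root asymptotics $S_1(i)/|V^{(i)}|\to\lambda/(\lambda-1)$ and $S_2(i)/|V^{(i)}|\to\lambda/(\lambda-1)^2$ with $\lambda=(\alpha_2+\varphi)/2$ are sound, and both of your simplifying identities check out algebraically, namely $(\varphi-\alpha_1)(\alpha_2+\varphi)=2n(\alpha_2+\varphi-2)$ and $(\alpha_2+\varphi-2)(\varphi-\alpha_2+2)=4(m-1)$, so the intermediate form $\lambda\ln n/(\lambda-1)+\lambda\ln m/(\lambda-1)^2$ rewrites exactly into the stated fraction and reproduces the value $1.70465$ at $n=3$, $m=2$. (The stray expression $\lambda/(\lambda-1)+\lambda\ln m/[(\lambda-1)^2\ln n]$ in your closing remark drops a factor of $\ln n$, but your actual derivation has it right.)
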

\normalsize 

With conditions that $n>m$, $n\geq 3$, and $m\geq 2$.

Therefore, as a correction of the entropy in \cite{ELATIK2021106117}, we find that, in the case of $n = 3$, and $m = 2$, the entropy is not 0.396176 as obtained, but it is $\sigma = \frac{\ln \left(\frac{729}{512}\right)+\sqrt{13} \ln (8)}{\sqrt{13}+1} = 1.70465$. 

And plot in \ref{Fig. 6}, with $n$ are on the x-axis and $m$ is on y-axis, z-axis is the entropy.
\begin{figure}[h]
    \centering
    \includegraphics[width=0.2\textwidth, height=0.2\textwidth]{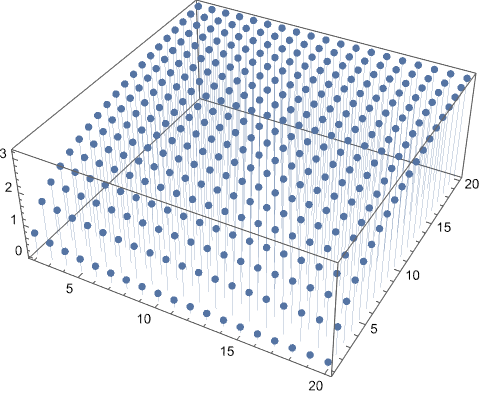}
    \caption{Plot of entropy of cycle fractals versus m and n.}
    \label{Fig. 6}
\end{figure}
\subsection{The Clustering Coefficient for self-similar graphs (Fractals) based on $C_n$ graphs}

The clustering coefficient is a measure of the degree to which vertices in a graph tend to cluster together. It is a common metric used to characterize the structure of graphs, particularly in the context of social networks. In an undirected graph, the clustering coefficient of a vertex is defined as the fraction of pairs of its neighbors that are also connected. 

and it can be calculated for $v_{i}$ by the following equation: 

\begin{equation}\label{eq3.9}
    C_{i}={\frac  {2|\{e_{{jk}}:v_{j},v_{k}\in N_{i},e_{{jk}}\in E\}|}{k_{i}(k_{i}-1)}} = \frac{2 \mathcal{E}_{i}}{k_{i}(k_{i}-1)}
\end{equation}

Where $\mathcal{E}_{i}$ is the total number of edges between the neighbors of $v_{i}$, and $k_{i}$ is the degree of this vertex. And the average clustering coefficient is just the average of the total of clutering coefficients. 

One important property of the clustering coefficient is that it tends to be higher in graphs with a more regular structure, such as lattices and small-world networks, and lower in graphs with a more random structure, such as random graphs. This has important implications for the spread of information or diseases in a network, as vertices with a higher clustering coefficient are more likely to be connected to other vertices in their cluster and thus more likely to transmit the information or disease to other members of the cluster.

There has been a significant amount of research on the clustering coefficient in graphs, particularly in the context of social networks. For example, \cite{Watts1998} showed that small-world networks, which have both high clustering and high degree of separation, can arise from relatively simple rewiring rules applied to regular lattices.

On the other hand, \cite{doi:10.1073/pnas.0400087101} found that the clustering coefficient in real-world social networks tends to decay with the degree of the vertex, a phenomenon known as the "disassortative mixing" pattern.

In addition, \cite{doi:10.1137/S003614450342480} developed a method to measure the global clustering coefficient of a graph, which gives a summary statistic for the entire graph rather than just individual vertices.

\textbf{Example 4:} In the graph $G^{(1,2,3)}$ which is described in the \ref{Fig. 4}, it has 12 vertex and the clustering coefficients can be easily obtained for each vertex, therefore, the set of clustering coefficients is ${\{\frac{1}{6}, \frac{1}{6}, \frac{1}{6}, 1, 1, 0, 0, 0, 1, 1, 1, 1}\}$ according to the number of vertices in the figure. therefore the average clustering coefficients can be obtained by: 

\begin{equation*}
    \frac{\frac{3}{6} + 6}{12} = \frac{\frac{3}{\binom{4}{2}} + 2\times{\vert}V^{(1)}{\vert}}{{\vert}V^{(2)}{\vert}}
\end{equation*}

\textbf{Example 5:} In the \ref{Fig. 5} the fractal graph $G^{(2,2,3)}$, it has 51 vertex and the clustering coefficients for all of them can be easily obtained for each vertex and average clustering coefficients can be obtained by: 

\begin{equation*}
    \frac{\frac{3}{15} + \frac{9}{6} + 12}{51} = \frac{\frac{3}{\binom{6}{2}} + \frac{{\vert}V^{(2)}{\vert} - {\vert}V^{(1)}{\vert}}{\binom{4}{2}} + 2\times{\vert}V^{(2)}{\vert}}{{\vert}V^{(3)}{\vert}}
\end{equation*}

\begin{theorem}
    The average clustering coefficient for any self-similar graph based on graphs of $C_n$ equals zero in $n\geq 4$, and with $n = 3$, the average clustering coefficient in stage $i \geq 1$ is as follows: 

\small
\begin{equation} \label{eq3.10}
    \bar{\mathcal{A}}(G^{(i,m,n)}) = \frac{\frac{3}{\binom{2(i+1)}{2}} + \sum^{i-1}_{j=1}\frac{{\vert}V^{(j + 1)}{\vert} - {\vert}V^{(j)}{\vert}}{\binom{2(i - j +1)}{2}} + 2\times {\vert}V^{(i)}{\vert}}{{\vert}V^{(i + 1)}{\vert}}
\end{equation}
\end{theorem}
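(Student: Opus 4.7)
The plan is to split on the value of $n$. For $n \ge 4$, I will prove by induction on $i$ that $G^{(i,m,n)}$ is triangle-free, whence $C_v = 0$ for every vertex and $\bar{\mathcal{A}} = 0$. The base $C_n$ has no triangle since $n \ge 4$. The $\xi_1$ (EPT) operation replaces each edge by a path of length $m \ge 2$, so it cannot introduce a triangle into a triangle-free graph. The $\xi_2$ (GLV) operation attaches a fresh copy of $C_n$ at each vertex, meeting the previous graph in only one vertex; any triangle of the new graph must therefore lie entirely inside a single attached $C_n$ or entirely inside the pre-attachment graph, both of which are triangle-free by induction.

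For $n = 3$ I classify every vertex of $G^{(i,m,n)}$ by its \emph{generation}, defined as the unique $j$ with $v \in V(G^{(j,m,n)}) \setminus V(G^{(j-1,m,n)})$. Under the construction, there are exactly $3$ generation-$0$ vertices and $|V^{(j+1)}| - |V^{(j)}|$ generation-$j$ vertices for each $1 \le j \le i$. For a fixed vertex $v$ of generation $j$ and each later stage $k > j$, the EPT step subdivides every edge incident to $v$ (so each current neighbor is replaced by a fresh stage-$k$ EPT vertex two steps away from $v$'s former neighbor), while the following GLV step attaches a new $C_3$ at $v$, adding two mutually adjacent leaf neighbors. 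Iterating this observation, I will show that in $G^{(i,m,n)}$ every generation-$j$ vertex with $j \le i - 1$ has degree exactly $2(i - j + 1)$, and that among these neighbors the only adjacent pair is the two leaves from the most recent GLV attachment. Hence $C_v = 2/[2(i-j+1)(2(i-j+1)-1)] = 1/\binom{2(i-j+1)}{2}$.

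I then handle the generation-$i$ vertices separately: a stage-$i$ EPT vertex has as its two neighbors the endpoints of the edge it subdivides, which are no longer adjacent in $G^{(i,m,n)}$, so $C_v = 0$; a stage-$i$ leaf vertex has its attachment vertex and its sibling leaf as neighbors, which remain joined by the third edge of the freshly attached triangle, so $C_v = 1$. The number of stage-$i$ leaf vertices is $2|V^{(i)}|$, since GLV at stage $i$ attaches a $C_3$ to each of the $|V^{(i)}|$ vertices of $G^{(i-1,m,n)}$, contributing two new leaves per attachment. Summing $\sum_v C_v$ over the generation classes and dividing by $|V^{(i+1)}|$ produces the closed form in equation (\ref{eq3.10}).

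The main obstacle is the middle step: showing that the only edge among the neighbors of a generation-$j$ vertex in $G^{(i,m,n)}$ is the one in the last attached triangle. The careful point is that older attached triangles are ``pushed away'' as direct neighbors by EPT subdivisions at subsequent stages, and that no spurious edges appear between neighbors originating from distinct GLV attachments or distinct edge subdivisions, since those substructures meet only at $v$ itself. I plan to handle this by a short induction on $k$ from $j+1$ up to $i$ that tracks the multiset of $v$'s current neighbors at each stage together with the induced edges among them, exploiting the fact that each newly attached $C_3$ is vertex-disjoint from all previous structure except for $v$.
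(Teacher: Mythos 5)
Your proposal is correct and, for the $n=3$ case, follows essentially the same route as the paper: classify vertices by the stage at which they are created, observe that a vertex born at stage $j$ has degree $2(i-j+1)$ in $G^{(i,m,n)}$, and sum the resulting clustering coefficients over the generation classes with $\lvert V^{(j+1)}\rvert - \lvert V^{(j)}\rvert$ vertices in generation $j\geq 1$ and $3$ in generation $0$. However, your write-up supplies two things the paper's proof does not. First, the paper's proof says nothing at all about the claim that $\bar{\mathcal{A}}=0$ for $n\geq 4$; your triangle-freeness induction (an EPT subdivision with $m\geq 2$ cannot create a triangle, and a GLV-attached $C_n$ meets the existing graph in a single cut vertex, so no triangle can straddle the attachment) is exactly the missing argument and is sound. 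Second, you isolate and justify the step the paper silently assumes, namely that among the $2(i-j+1)$ neighbors of a generation-$j$ vertex exactly one pair is adjacent — the pair contributed by the most recent GLV attachment — because every older triangle incident to $v$ has had its edges subdivided at later stages, and the surviving neighbors other than the newest pair are interior vertices of pairwise internally disjoint paths. That is the correct reason each such vertex contributes $1/\binom{2(i-j+1)}{2}$ rather than something larger, and together with your separate treatment of the newest generation (subdivision vertices contributing $0$ and the $2\lvert V^{(i)}\rvert$ freshly attached triangle vertices contributing $1$ each) it yields the stated formula. In short, your argument is the paper's argument done carefully, plus the $n\geq 4$ half of the theorem that the paper's proof omits entirely.
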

\begin{proof}
    The case when $i = 1$, and $i = 2$ is illustrated in examples 4 and 5. In the fractals based on graphs of $C_n$ which is $G^{(i,m,n)}$ where $i \geq 1$, as $G^{(0,m,n)}$ is just the graph of $C_{3}$. The degree is continuously increasing as multiples of $2$. And by using the formula in equation \ref{eq3.9} $\frac{2}{k_{i}(k_{i}-1)} = \frac{1}{\binom{k_{i}}{2}}$, where $k_{i}$ is the degree. As proved in how the fractals are growing, it has proven that the number of graphs that transform from $G^{(i,m,n)}$ to $G^{(i+1,m,n)}$ is preserved. Therefore, the number of vertices is growing as described in the formula \ref{verteq}, and the degree is simply changed as $(0, 2, 4, 6, ..., 2i)$ where zero is the degree that did not exist in the stage $G^{(i-1,m,n)}$, that has been added by the operation $\xi_{1}$. 
    
    Furthermore, the degree transforms to a certain number of edges that can be described by ${\vert}V^{(j + 1)}{\vert} - {\vert}V^{(j)}{\vert}$ and there will be always the number of vertices that has a degree 2, and if they are not part of central graphs $H^{(i,m)}$, they will have a clustering coefficient equals to 1, and the number of them is 2 per each vertex. As for calculating the average, the denominator should be the total number of vertices in this stage ${i}$. Therefore, the formula in \ref{eq3.10} is correct.
\end{proof}

\section{The self-similar (Fractals) graphs based on $W_n$ graphs} \label{sec:WN}

\subsection{The fractal growth}

The fractals based on the $W_n$ graphs are constructed by the operations in \textbf{Definition 2, 3}, which is $\xi_{1}$ and  $\xi_{2}$ with given $m$, similar to the way for $C_n$. 

Taking into consideration that the wheel graph is defined as the following: $W_{n}(V, E)$, where $V = n+1$, and $E = 2n$. 

\noindent \textbf{Example 6:} For instance, beginning with $G^{(0,2,4)}$ is $W_4$, and $m = 2$.

\begin{table}
  [ht] \label{tab:C4}
  \begin{tabular}
      {|c|c|c|} \hline Stage & Graphs inside & Graph \\
      \hline $G^{(0,2,4)}$ & $W_{4}$ & {
      \includegraphics[width=1in]{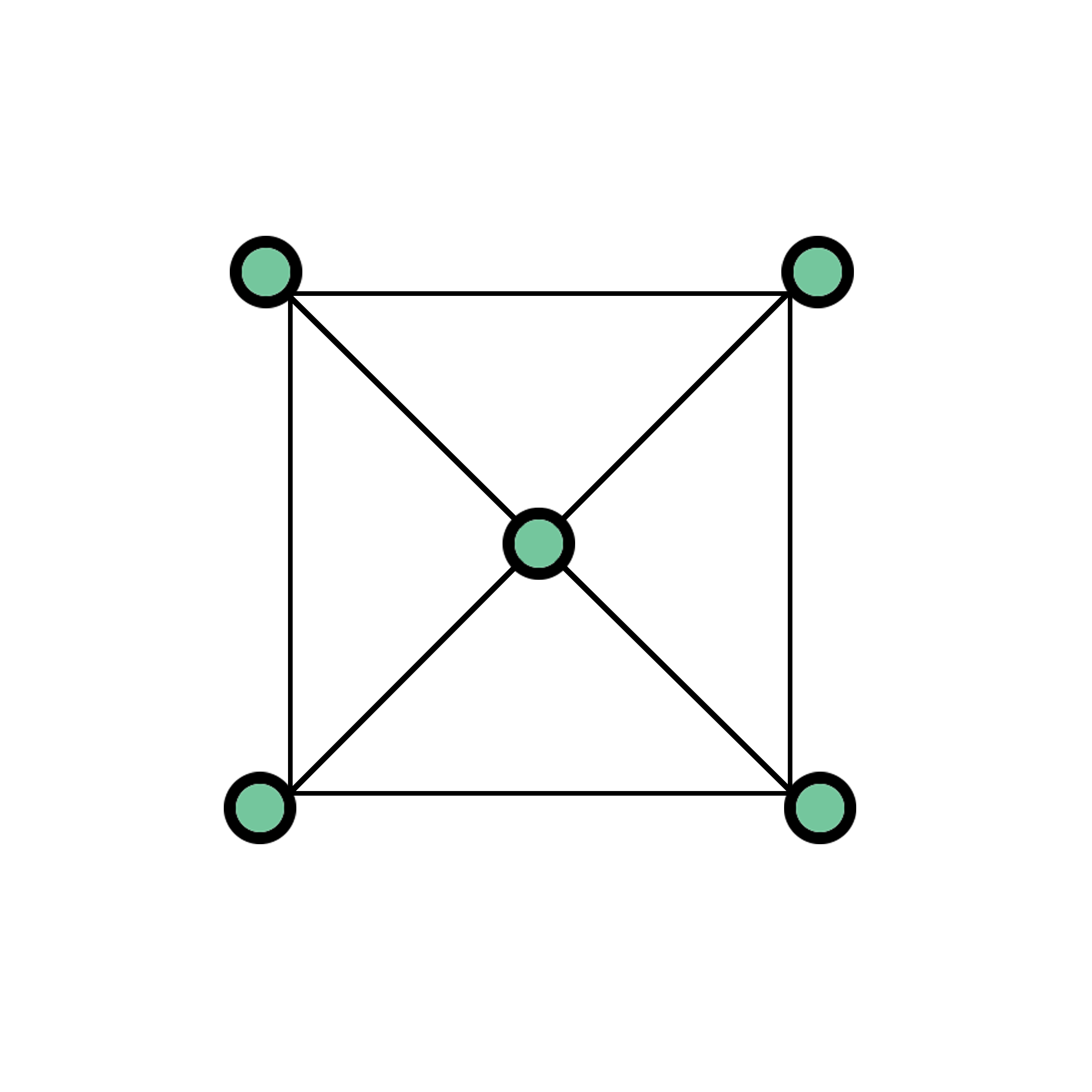}} \\
      $G^{(1,2,4)}$ & $5 G^{(0,2,4)} \cup H^{(1,2)}$ & {
      \includegraphics[width=1in]{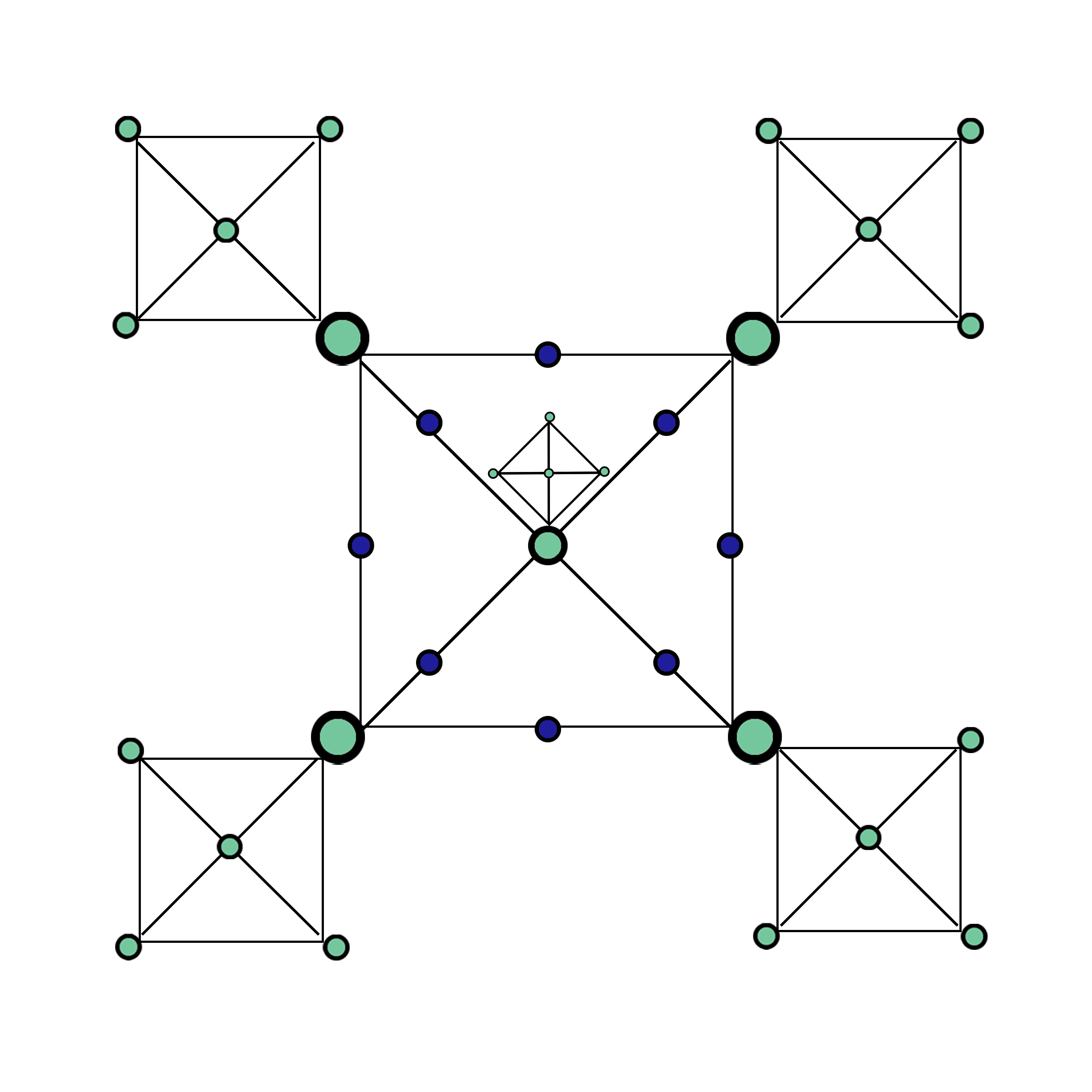}} \\
      $G^{(2,2,4)}$ & $5 G^{(1,2,4)} \cup 8 G^{(0,2,4)} \cup H^{(2,2)}$ & {
      \includegraphics[width=1in]{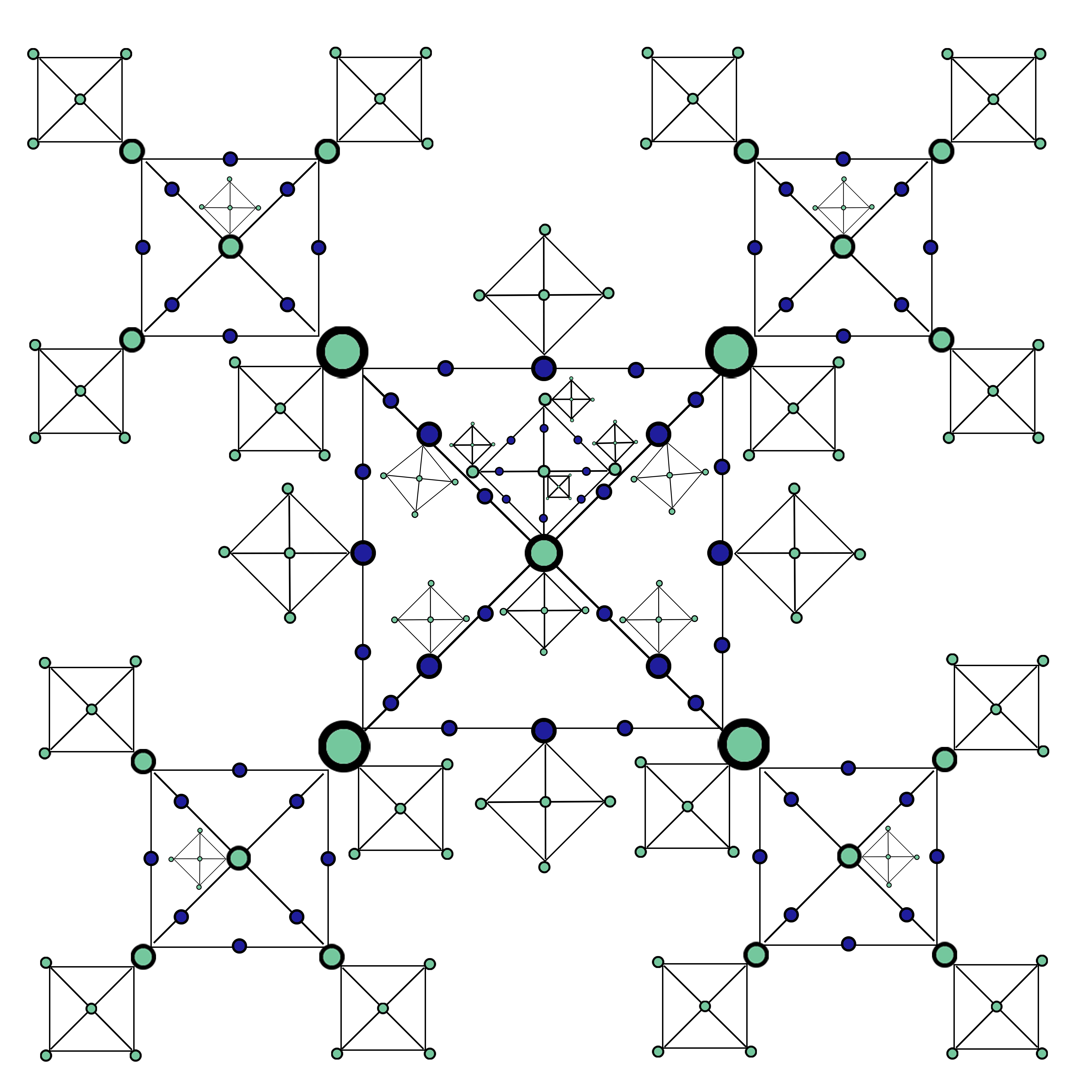}} \\
      \hline
  \end{tabular}
\end{table}

\noindent From the previous example, we can observe that $G^{(1,2,4)}$ is 5 copies of $G^{(0,2,4)}$ and $H^{(1,2)}$, and $G^{(2,2,4)}$ is 5 copies of $G^{(1,2,4)}$, $8$ copies of $G^{(0,2,4)}$ and $H^{(2,2)}$.

Therefore, each copy of $G^{(t,m,n)}$ in $G^{(i,m,n)}$ will be transformed to $G^{(t+1,m,n)}$ in $G^{(i+1,m,n)}$, and the number of copies will be preserved as in $C_n$ case. 

\begin{lemma} \label{lemma4}
    In $G^{(i,m,n)}$ of $W_n$, we have ($(n+1)$ copies of $G^{(i-1,m,n)}$, $(2n)(m-1)(m^{0})$ copies of $G^{(i-2,m,n)}$, $(2n)(m-1)(m^{1})$ copies of $G^{(i-3,m,n)}$, ..., $(2n)(m-1)(m^{(k-2)})$ copies of $G^{(i-k,m,n)}$, and $H^{(i,m)}$ which is the graph that resulted from operation $\xi_{1}$ on the $H^{(i-1,m)}$, where $H^{(1,m)}$ is the result from the operation $\xi_{1}$ on $G^{(0,m,n)}$. 
\end{lemma}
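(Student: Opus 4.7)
My plan is to prove Lemma \ref{lemma4} by induction on $i$, closely paralleling the argument for Lemma \ref{lemma 1} but with the vertex and edge counts of the wheel ($|V(W_n)| = n+1$, $|E(W_n)| = 2n$) in place of those of the cycle. The base case $i = 1$ is a direct reading of the two operations: $\xi_{1}$ subdivides each of the $2n$ edges of $W_n$ into a path of length $m$ to produce the central graph $H^{(1,m)}$, while $\xi_{2}$ attaches one fresh copy of $G^{(0,m,n)} = W_n$ at each of the $n+1$ original vertices, giving exactly the asserted $(n+1)$ copies of $G^{(0,m,n)}$ with no lower-index contribution (the sum over $k \geq 2$ is empty at $i = 1$).

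For the inductive step I would assume the claimed decomposition at stage $i$ and show that the simultaneous application of $\xi_{1}$ and $\xi_{2}$ shifts it uniformly to stage $i+1$. The key self-similarity observation, imported from the proof of Lemma \ref{lemma 1}, is that each block presently identified as a copy of $G^{(t,m,n)}$ in the stage-$i$ decomposition --- together with the $\xi_{1}$-subdivisions of its own edges and the $\xi_{2}$-attachments at its own vertices --- assembles into a copy of $G^{(t+1,m,n)}$ at stage $i+1$. Under this relabelling, the $(n+1)$ copies of $G^{(i-1,m,n)}$ become $(n+1)$ copies of $G^{(i,m,n)}$; each of the $(2n)(m-1)m^{k-2}$ copies of $G^{(i-k,m,n)}$ becomes a copy of $G^{(i+1-k,m,n)}$; and $H^{(i,m)}$ passes to $H^{(i+1,m)}$ by a single application of $\xi_{1}$.

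The only term not inherited by relabelling is the crop of fresh $G^{(0,m,n)}$-attachments appearing at stage $i+1$, which correspond precisely to the vertices of $H^{(i,m)}$ that are \emph{not} already identification points of some previously attached block. I would compute this as $|V(H^{(i,m)})|$ minus the total number of attached blocks in the stage-$i$ decomposition. Using the inductive hypothesis, the latter is the geometric sum $(n+1)+(2n)(m-1)\sum_{k=2}^{i}m^{k-2}$, while iterating $|V(H^{(j+1,m)})| = |V(H^{(j,m)})| + (m-1)|E(H^{(j,m)})|$ with $|E(H^{(j,m)})| = 2n m^j$ gives another geometric sum for the former. I expect the difference to collapse to $(2n)(m-1)m^{i-1}$, which is exactly the coefficient required for the $k = i+1$ term in the stage-$(i+1)$ decomposition, closing the induction.

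The main technical obstacle will be the self-similar replication step: making precise the sense in which each stage-$i$ block, together with its surrounding new vertices and attachments, assembles disjointly into a stage-$(i+1)$ block. Because $\xi_{2}$-attachments are identified at a shared vertex rather than glued disjointly, a new $W_n$ placed at a vertex shared between two blocks (typically a block and the central graph) must be assigned unambiguously to exactly one of them. I plan to resolve this with the convention already implicit in the stage-$i$ decomposition: each attached block shares exactly its root vertex with $H^{(i,m)}$; any new $\xi_{2}$-attachment at that root is absorbed into the block transformed from it, while new $\xi_{2}$-attachments at non-root vertices of $H^{(i,m)}$ are precisely the fresh $G^{(0,m,n)}$ copies being counted.
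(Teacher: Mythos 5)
Your proposal follows the same route as the paper's own proof: induction on $i$, with each stage-$i$ block transforming into the corresponding stage-$(i+1)$ block under $\xi_{1}$ and $\xi_{2}$, the central graph passing to $H^{(i+1,m)}$, and the fresh $G^{(0,m,n)}$ copies arising at the vertices of $H^{(i,m)}$ that are not already attachment roots. Your count of those fresh copies --- $|V(H^{(i,m)})|$ minus the number of previously attached blocks, which the two geometric sums reduce to $(2nm^{i}-n+1)-(2nm^{i-1}-n+1)=(2n)(m-1)m^{i-1}$ --- is correct and in fact supplies the one quantitative step that the paper's proof only asserts.
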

\begin{proof}
    This lemma can be proved by induction, as it is true for ($i = 1$, and $i = 2$), which is in example 6. 
    
    Assume that in $G^{(i,m,n)}$, we have ($(n+1)$ copies of $G^{(i-1,m,n)}$, $(2n)(m-1)(m^{0})$ copies of $G^{(i-2,m,n)}$, $(2n)(m-1)(m^{1})$ copies of $G^{(i-3,m,n)}$, ..., $(2n)(m-1)(m^{(k-2)})$ copies of $G^{(i-k,m,n)}$, and $H^{(i,m)}$). 
    
    And we want to prove that in $G^{(i+1,m,n)}$, we have ($(n+1)$ copies of $G^{(i,m,n)}$, $(2n)(m-1)(m^{0})$ copies of $G^{(i-1,m,n)}$, $(2n)(m-1)(m^{1})$ copies of $G^{(i-2,m,n)}$, ..., $(2n)(m-1)(m^{(k-1)})$ copies of $G^{(i-k+1,m,n)}$, and $H^{(i+1,m)}$, when we perform the operations of $\xi_{1}$ and $\xi_{2}$, we can see that each copy of $G^{(i,m,n)}$ will be converted to $G^{(i+1,m,n)}$. 

    Since each copy of $G^{(t,m,n)}$ in $G^{(i,m,n)}$ will be transformed to $G^{(t+1,m,n)}$ in $G^{(i+1,m,n)}$, and the number of copies will be preserved, and a subset of vertices in $H^{(i,m)}$ will be transformed to $G^{(0,m,n)}$, and those subsets are the vertices that have been added to the edges in the operation of $\xi_{2}$, and the $H^{(i,m)}$ will be transformed to $H^{(i+1,m)}$.

    Therefore, we can assure that in $G^{(i+1,m,n)}$ that contains ($(n+1)$ copies of $G^{(i-1,m,n)}$, $(2n)(m-1)(m^{0})$ copies of $G^{(i-2,m,n)}$, $(2n)(m-1)(m^{1})$ copies of $G^{(i-3,m,n)}$, ..., $(2n)(m-1)(m^{(k-2)})$ copies of $G^{(i-k,m,n)}$, and $H^{(i,m)}$) when we perform the operations of $\xi_{1}$ and $\xi_{2}$, we can see that each copy of $G^{(i,m,n)}$ will be converted to $G^{(i+1,m,n)}$. Moreover, we have a central graph $H^{(i,m)}$ which will grow to $H^{(i+1,m)}$, so it will grow in this way using the operations in definitions 2, and 3, to be the graph in the definition 4. 
\end{proof}

\begin{lemma} \label{lemma5}
    The number of vertices and the edges in any $W_n$ fractals that are built previously can be given by the following two recursive formulas:
    \begin{gather*}
        \vert{V^{(i)}}\vert = (n+1)\times {\vert}V^{(i-1)}\vert + (m-1)\times {\vert}E^{(i-1)}\vert \\
        {\vert}{E^{(i)}}{\vert} = (2n)\times {\vert}V^{(i-1)}{\vert} + (m)\times {\vert}E^{(i-1)}{\vert}       
    \end{gather*}
And by solving them together, it is easy to separate them as follows:
\begin{gather}
        \vert{V^{(i)}}\vert = (n + m + 1)\times {\vert}V^{(i-1)}\vert + (mn - m - 2n)\times {\vert}V^{(i-2)}\vert \\
        {\vert}{E^{(i)}}{\vert} = (n + m + 1)\times {\vert}E^{(i-1)}{\vert} + (mn - m - 2n)\times {\vert}E^{(i-2)}{\vert}    
    \end{gather}
\end{lemma}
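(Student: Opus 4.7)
The plan is to first derive the two coupled first-order recurrences by direct counting, and then decouple them into the stated second-order recurrences via a $2\times 2$ matrix together with the Cayley--Hamilton theorem. This is a direct adaptation of the strategy used in Lemma~\ref{lemma2} for the cycle-based model, with the base-graph parameters $(|V(W_n)|, |E(W_n)|) = (n+1, 2n)$ in place of the cycle parameters $(n, n)$.

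For the first pair, I would track how $\xi_1$ and $\xi_2$ each contribute vertices and edges when they are applied to $G^{(i-1,m,n)}$ to produce $G^{(i,m,n)}$. The operation $\xi_1$ replaces each of the $|E^{(i-1)}|$ edges by a path of length $m$, which introduces $m-1$ new internal vertices per edge and turns one edge into $m$ edges; this contributes $(m-1)|E^{(i-1)}|$ to the vertex count and $m|E^{(i-1)}|$ to the edge count. Simultaneously, $\xi_2$ glues a fresh copy of $W_n$ at every original vertex by identifying the distinguished attachment vertex of $W_n$ with the existing vertex, which contributes $n$ genuinely new vertices and $2n$ new edges per original vertex, totalling $n|V^{(i-1)}|$ vertices and $2n|V^{(i-1)}|$ edges. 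Adding the pre-existing $|V^{(i-1)}|$ vertices to the two new-vertex contributions gives the first vertex recurrence, and the edge recurrence follows analogously.

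For the decoupling, I would rewrite the coupled system as the matrix recursion
\begin{equation*}
\begin{pmatrix} |V^{(i)}| \\ |E^{(i)}| \end{pmatrix} = A \begin{pmatrix} |V^{(i-1)}| \\ |E^{(i-1)}| \end{pmatrix},\qquad A = \begin{pmatrix} n+1 & m-1 \\ 2n & m \end{pmatrix},
\end{equation*}
compute the trace $n+m+1$ and the determinant $(n+1)m - 2n(m-1) = m + 2n - mn$, and invoke Cayley--Hamilton to conclude $A^{i} = (n+m+1)\,A^{i-1} - (m+2n-mn)\,A^{i-2}$. Applied to the initial vector, each coordinate of $(|V^{(i)}|, |E^{(i)}|)^{T}$ then satisfies the common scalar recurrence $x_i = (n+m+1)\,x_{i-1} + (mn - m - 2n)\,x_{i-2}$, which is exactly the claimed second pair.

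I do not foresee a substantive obstacle, since both steps are routine. The only care needed is in the interpretation of $\xi_2$: the attached copy of $W_n$ shares its distinguished vertex with the host vertex (rather than being joined by a new edge), and $\xi_2$ is applied only to the original vertices of $G^{(i-1,m,n)}$, not to the internal vertices just introduced by $\xi_1$ in the same step. Both conventions are fixed by Definitions~2 and~3 and are consistent with the bookkeeping in Lemma~\ref{lemma2}. A brief sanity check against Example~6 (the $W_4$, $m=2$ case) should be used to confirm the coefficients before finalizing.
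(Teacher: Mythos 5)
Your proposal is correct: the coupled first-order recurrences follow from exactly the counting you describe (each old vertex spawns $n$ new vertices and $2n$ new edges via $\xi_2$, each old edge spawns $m-1$ new vertices and becomes $m$ edges via $\xi_1$), and your Cayley--Hamilton decoupling with trace $n+m+1$ and determinant $m+2n-mn$ reproduces the stated second-order recurrences, consistent with the sequence $1,5,33,221,1481,\ldots$ for $W_4$, $m=2$. The paper states this lemma without an explicit proof beyond ``by solving them together,'' and your argument is precisely the routine elaboration of that intended route, including the correct convention that $\xi_2$ identifies a vertex of the new $W_n$ with the host vertex and is not applied to the vertices just created by $\xi_1$.
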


The $n$ indicates the original graph vertices and the $m$ is the path length in the $\xi_{2}$. Therefore in fractals of $W_4$; $V^{(1)} = 5$, $E^{(1)} = 8$, and we will take the same initial step as in cycles, that $V^{(0)} = 1$, $E^{(0)} = 0$, those considered as constants. The equation form for the recursive relation in the equation of the vertices: 
\begin{equation}\label{eq4.3}
    \begin{split}
        \vert & {V^{(i)}}\vert = \frac{2^{-i-1}}{\zeta } \\
        & {\times} \left( ({\alpha_{1}}+\zeta -1) (-\zeta +\alpha_2 +1)^i+(\zeta -\alpha_2 +1) (\zeta +\alpha_2 +1)^i \right)
    \end{split}
\end{equation}
Where $\zeta = \sqrt{6 (m-1) n+(m-1)^2+n^2}$, $\alpha_1$, and $\alpha_2$ is defined as before in equation \ref{eq3.3}. ($i > 0$, $m \geq 2$, and $n \geq 3$).

\subsection{The Number of Spanning Tree for self-similar (Fractal) graphs based on $W_n$ graphs}

The number of spanning trees for $W_{n}$ can be obtained by:
\begin{equation}\label{eq4.4}
    \begin{split}
        \tau(W_{n}) & = \\
        & = L_{2n} - 2 \\
        & = F_{2n+2} - F_{2n-2} - 2 \\
        & = \left(\frac{1}{2} \left(\sqrt{5}+1\right)\right)^{2 n}+\left(\frac{2}{\sqrt{5}+1}\right)^{2 n} \cos (2 \pi  n)-2
    \end{split}
\end{equation}

\noindent And this is proved several times such as in \cite{haghighi2009recursive}. Where $L_n$ is the Lucas number, such that: $L_{n+2} = L_{n+1} + L_{n}$ for $n \geq 1$, where $L_1 = 1, L_2 = 3$. And the $F_n$ is the Fibonacci Number. 
\begin{theorem}
    The number of spanning trees for any self-similar graph based on $W_{n}$ graphs in any stage $i \geq 0$ can be given by the following equation:
    \begin{equation}\label{eq4.5}
        \begin{split}
            \tau(G^{(i,m,n)}) & = \\ 
            & {\left(L_{2n} - 2\right)}^{\sum^{i}_{j=0}{{\vert}V^{(j)}{\vert}}} \\
            & {\times}{(m^{n \times \sum^{i}_{j=0}{(i-j)\times{\vert}V^{(j)}{\vert}}})}
        \end{split}
    \end{equation}
\end{theorem}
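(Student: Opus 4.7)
The plan is to combine three ingredients: (i) the recursive decomposition of Lemma \ref{lemma4}, (ii) multiplicativity of $\tau$ under gluing along a single cut vertex, and (iii) the classical subdivision identity, which says that if $G'$ is obtained from $G$ by replacing every edge with a path of length $k$, then $\tau(G') = k^{\,|E(G)|-|V(G)|+1}\,\tau(G)$.

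First I would upgrade Lemma \ref{lemma4} to the cleaner statement that $G^{(i,m,n)}$ is a block graph whose $2$-connected blocks consist of exactly $|V^{(i-j)}|$ copies of $H^{(j,m)}$ for each $j=0,1,\dots,i$, with the convention $H^{(0,m)} := G^{(0,m,n)} = W_n$. This is proved by induction on $i$: $\xi_{1}$ subdivides every edge, so it promotes each $H^{(j,m)}$ block of $G^{(i,m,n)}$ to an $H^{(j+1,m)}$ block while preserving the cut-vertex tree structure, whereas $\xi_{2}$ attaches a fresh copy of $W_n = H^{(0,m)}$ through a single cut vertex at each of the $|V^{(i+1)}|$ vertices of $G^{(i,m,n)}$; combining the two bookkeeps the new multiplicities exactly as required.

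Next I would evaluate $\tau$ on each block type. The base block contributes $\tau(W_n) = L_{2n} - 2$ by equation \ref{eq4.4}. For the central block $H^{(j,m)}$, applying $\xi_{1}$ exactly $j$ times to $W_n$ subdivides every original edge into a path of length $m^{j}$; since $W_n$ has cyclomatic number $|E(W_n)|-|V(W_n)|+1 = 2n - (n+1) + 1 = n$, the subdivision identity gives $\tau(H^{(j,m)}) = m^{jn}(L_{2n}-2)$, a formula that also correctly handles $j=0$.

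By multiplicativity at cut vertices,
\begin{equation*}
\tau(G^{(i,m,n)}) \;=\; \prod_{j=0}^{i}\bigl(m^{jn}(L_{2n}-2)\bigr)^{|V^{(i-j)}|} \;=\; (L_{2n}-2)^{\sum_{k=0}^{i}|V^{(k)}|}\; m^{\,n\sum_{k=0}^{i}(i-k)|V^{(k)}|},
\end{equation*}
after the reindexing $k = i - j$, which is precisely equation \ref{eq4.5}. The main obstacle is the first step: the block-multiset identity is not quite spelled out in Lemma \ref{lemma4}, and one must verify carefully both that every application of $\xi_{2}$ produces a single cut-vertex attachment (so that $\tau$-multiplicativity applies) and that the $|V^{(i-j)}|$ multiplicities emerge consistently from the vertex recursion of Lemma \ref{lemma5}.
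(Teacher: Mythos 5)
Your proposal is correct and follows the same overall skeleton as the paper's proof: decompose $G^{(i,m,n)}$ into wheel copies plus central graphs via Lemma \ref{lemma4}, use multiplicativity of $\tau$ over blocks meeting at cut vertices, and multiply the contributions. Where you genuinely diverge is in the evaluation of the central graphs, and your route is the more solid one. The paper treats $H^{(i,m)}$ by viewing $W_n$ as the join of $C_n$ with $K_1$ and asserting $\tau(H^{(1,m)})=m^{n}$ and $\tau(H^{(i,m)})=m^{ni}$; taken literally this is off by a factor of $\tau(W_n)=L_{2n}-2$ (for $n=4$, $m=2$ the once-subdivided wheel has $720=2^{4}\cdot 45$ spanning trees, not $2^{4}$), and the paper does not really explain why the exponent of $(L_{2n}-2)$ in equation \ref{eq4.5} is $\sum_{j=0}^{i}|V^{(j)}|$ rather than merely the count $|V^{(i)}|$ of pristine wheel blocks. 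Your subdivision identity $\tau(G')=k^{|E|-|V|+1}\tau(G)$, applied with cyclomatic number $2n-(n+1)+1=n$, yields $\tau(H^{(j,m)})=m^{jn}(L_{2n}-2)$ in one rigorous step, and then both exponents fall out automatically from the product $\prod_{j=0}^{i}\bigl(m^{jn}(L_{2n}-2)\bigr)^{|V^{(i-j)}|}$. The obstacle you flag --- that each $\xi_{2}$ attachment is through a single cut vertex and that the block multiplicities are exactly $|V^{(i-j)}|$ --- is likewise left implicit in the paper, but it is confirmed by the vertex recursion of Lemma \ref{lemma5}: the term $(n+1)|V^{(i-1)}|$ shows each existing vertex acquires exactly $n$ new vertices, i.e.\ the attached $W_n$ shares precisely one vertex with the host, and the induction on block counts (each $H^{(j)}$ block promoted to $H^{(j+1)}$, plus one fresh $W_n$ per vertex of the previous stage) gives the stated multiplicities. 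Your argument is therefore complete, and arguably repairs the weakest step of the paper's own proof.
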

\begin{proof}
    By Lemma \ref{lemma4}, and \ref{lemma5}, we can see that in every stage $i$, we will have $G^{i,m,n}$ such that $G^{i,m,n}$ contains ($(n+1)$ copies of $G^{(i-1,m,n)}$, $(2n)(m-1)(m^{0})$ copies of $G^{(i-2,m,n)}$, $(2n)(m-1)(m^{1})$ copies of $G^{(i-3,m,n)}$, ..., $(2n)(m-1)(m^{(k-2)})$ copies of $G^{(i-k,m,n)}$, and $H^{(i,m)}$, and in the $G^{(i-1,m,n)}$ ($(n+1)$ copies of $G^{(i-2,m,n)}$, $(2n)(m-1)(m^{0})$ copies of $G^{(i-3,m,n)}$, $(2n)(m-1)(m^{1})$ copies of $G^{(i-4,m,n)}$, ..., $(2n)(m-1)(m^{(k-2)})$ copies of $G^{(i-k-1,m,n)}$, and $H^{(i-1,m)}$. Recursively, we will have every time a specific number of $G^{(0,m,n)}$, which is the ordinary wheel graph which has a specific formula by which we can know the number of spanning trees in equation \ref{eq4.4}, and it will be repeated $\vert V^{(i)}$ and this $i$ vary from 0 to the $i$ stage itself, the same case with the cycles. The rest part will be the central graphs $H(i,m)$, which can be calculated through that, every $G^{(i,m,n)}$ contains $H(i,m)$, and number of $G^{(i-1,m,n)}$ that contains $H^{(i-1,m)}$, and so on, till having number of $H^{(1,m)}$, number of $H^{(2,m)}$, ..., and one $H^{(i,m)}$. And $H^{(1,m)}$ can be calculated easily through this relation $\tau (H^{(1,m)}) = m^{n}$, as it is not a wheel anymore, and it can be noticed easily by considering the first wheel graph as a join between $C_{n}$ graph and $K_{1}$ as here \cite{article}, then we have n number of $C_n$ which evolved to be $C_{m^{i}}$, so the number of spanning trees for the central graph is $\tau (H^{(1,m)}) = m^{n \times i}$. Therefore, the second part of the formula \ref{eq4.5} is for the central graphs. 
\end{proof}

And by executing the summations, using the Binet formula in equation \ref{eq4.3}, the first summation will be as follows: 
\begin{equation}
    \begin{split}
        & \sum _{j=0}^i V^{(j)} = \frac{2^{-i-1}}{\zeta n} {\times} \\
        & (\zeta  \left(2^{i+1}+(n-1) (\eta +\Omega )\right) \\
        & +(\Omega -\eta ) (m (n-1)-n (n+4)+1))
    \end{split}
\end{equation}

\begin{equation}
    \begin{split}
        & \sum _{j=0}^i (i-j)V^{(j)} = \frac{2^{-i-1}}{\zeta n (m-1)} {\times} \\
        & (2^{-i-1}(\zeta  2^{i+1} (m (i n+n-1)-((i+3) n)+1)) \\
        & + (\eta -\Omega ) \left((3 m-5) n^2+(m-6) (m-1) n-(m-1)^2\right) \\
        & - \zeta  (\eta +\Omega ) (m (n-1)-3 n+1))
    \end{split}
\end{equation}

Where $(\zeta +\alpha_2 +1)^i = \eta$, and $(- \zeta +\alpha_2 +1)^i = \Omega$. 

\noindent \textbf{Example 7}: If we are going to take $n = 4$ and $m = 2$ as  Example 6. We will find that, the number of vertices in each stage $i$ beginning from 0 is as follows: $(1, 5, 33, 221, 1481, ...)$, the number of spanning trees (knowing that the number spanning trees of $W_4$ can be obtained by the formula \ref{eq4.4} and it is equal to 45):
\begin{equation*}
\begin{split}
\tau\left(G^{\left(1,2,4\right)}\right) & = 45^{{\vert}V^{1}{\vert}+{\vert}V^0{\vert}}\times2^{4 {\times} ({0\times {\vert}V^{1}{\vert} + 1 \times {\vert}V^0{\vert}})} \\
 & = 45^{5+1}\times2^4 \\ 
 & = 45^6\times2^4
\end{split}
\end{equation*}
\begin{equation*}
\begin{split}
\tau\left(G^{\left(2,2,4\right)}\right) & = 45^{{\vert}V^2{\vert}+{\vert}V^1{\vert}+{\vert}V^0{\vert}}\times2^{ 4 {\times}({\vert}V^1{\vert}+2{\vert}V^0{\vert})} \\
 & = 45^{33+5+1}\times2^{4(5+2)} \\
 & = 45^{39}\times2^{28}
\end{split}
\end{equation*}
\begin{equation*}
\begin{split}
\tau\left(G^{\left(3,2,4\right)}\right) & = 45^{{\vert}V^3{\vert}+{\vert}V^2{\vert}+{\vert}V^1{\vert}+{\vert}V^0{\vert}}\times2^{4 {\times}({\vert}V^2{\vert}+2{\vert}V^1{\vert}+3{\vert}V^0{\vert})} \\ 
 & = 45^{221+33+5+1}\times2^{4(33+10+3)} \\
 & = 45^{260}\times2^{184}
\end{split}
\end{equation*}

This confirms the computational results that are based on taking a graph's vertices and edges and computing the determinant of the Laplacian matrix using a Python program. 

\subsection{The Entropy in self-similar graphs (Fractals) based on $W_n$ graphs}
\begin{corollary}
    By using the definition of entropy and its formula in \ref{eq3.7}, we will get the entropy $\sigma(G^{(i,m,n)})$ but after defining some variables: 
\begin{equation*}
    \begin{split}
    & A = \frac{4 (m-1)}{(\zeta -\alpha_2 +1) (\zeta +\alpha_2 -1)^2} \\
    & B = \frac{4 n \ln (m)}{(\zeta -\alpha_2 +1)^2} \\ 
        & \times (-\zeta +m^2 (n-1) + m (\zeta +n (-\zeta +3 n-7)+2) \\
        & +n (3 \zeta -5 n+6)-1) \\
    & C = \frac{1}{-\zeta +\alpha_2 -1} \\
        & \times (\zeta +\alpha_2 -1) (\zeta +m (n-1)-n (\zeta +n+4)+1) \\
        & \times \text{ln}\left[\left(\frac{1}{2} \left(\sqrt{5}+1\right)\right)^{2 n}-2\right]. \\
    & D = 4^n \left(\sqrt{5}+1\right)^{-2 n} \cos (2 \pi  n)
    \end{split}
\end{equation*}
\begin{equation}
    \sigma(G^{(i,m,n)}) = A {\times} (B + C + D)
\end{equation}
With conditions that $n\geq 3$, and $m\geq 2$.

And plot in \ref{Fig. 7}, with $n$ are on the x-axis and $m$ is on y-axis, z-axis is the entropy.
\end{corollary}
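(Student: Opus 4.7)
The plan is to apply the definition of entropy from equation~(\ref{eq3.7}) directly to the closed-form spanning-tree count of equation~(\ref{eq4.5}). Taking the natural log of~(\ref{eq4.5}) gives
\[
\ln\tau(G^{(i,m,n)}) = \left(\textstyle\sum_{j=0}^{i}|V^{(j)}|\right)\ln(L_{2n}-2) + n\ln(m)\sum_{j=0}^{i}(i-j)|V^{(j)}|,
\]
so that
\[
\sigma(G^{(i,m,n)}) = \ln(L_{2n}-2)\cdot\lim_{i\to\infty}\frac{\sum_{j=0}^{i}|V^{(j)}|}{|V^{(i)}|} \;+\; n\ln(m)\cdot\lim_{i\to\infty}\frac{\sum_{j=0}^{i}(i-j)|V^{(j)}|}{|V^{(i)}|}.
\]
The entire problem therefore reduces to evaluating these two limits in closed form and repackaging the result.

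By the Binet-style formula~(\ref{eq4.3}), $|V^{(i)}|$ is a linear combination of $\lambda_\pm^{\,i}$, where $\lambda_\pm = \tfrac12(\alpha_2+1\pm\zeta)$ are the roots of the characteristic polynomial of the Lemma~\ref{lemma5} recurrence. Under the stated conditions $m\ge 2$, $n\ge 3$, one verifies $\lambda_+>|\lambda_-|$ and $\lambda_+>1$, so $|V^{(i)}|\sim c\,\lambda_+^{\,i}$ for the explicit coefficient $c$ read off from~(\ref{eq4.3}). A standard geometric-series computation then gives
\[
\lim_{i\to\infty}\frac{\sum_{j=0}^{i}|V^{(j)}|}{|V^{(i)}|} = \frac{\lambda_+}{\lambda_+-1},
\]
and rewriting $\sum_{j=0}^{i}(i-j)|V^{(j)}|$ as $\sum_{k=0}^{i}k\,|V^{(i-k)}|$ and invoking the identity $\sum_{k\ge 0}k\,x^{k}=x/(1-x)^{2}$ at $x=1/\lambda_+$ yields the companion limit $\lambda_+/(\lambda_+-1)^{2}$.

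Substituting $\lambda_+ = (\zeta+\alpha_2+1)/2$ and $\lambda_+-1 = (\zeta+\alpha_2-1)/2$, the two limits become $(\zeta+\alpha_2+1)/(\zeta+\alpha_2-1)$ and $2(\zeta+\alpha_2+1)/(\zeta+\alpha_2-1)^{2}$ respectively, and combining gives
\[
\sigma(G^{(i,m,n)}) = \frac{(\zeta+\alpha_2+1)\ln(L_{2n}-2)}{\zeta+\alpha_2-1} + \frac{2n(\zeta+\alpha_2+1)\ln m}{(\zeta+\alpha_2-1)^{2}}.
\]
The final step is to recast this into the stated $A(B+C+D)$ shape. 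The key algebraic identity is $(\zeta-\alpha_2+1)(\zeta+\alpha_2-1) = \zeta^{2}-(\alpha_2-1)^{2} = 4n(m-1)$, which lets us introduce the common prefactor $A = 4(m-1)/[(\zeta-\alpha_2+1)(\zeta+\alpha_2-1)^{2}]$; expanding $L_{2n}-2$ using~(\ref{eq4.4}) then lands the principal Fibonacci part in $C$, the coefficient of $\ln m$ in $B$, and the sub-dominant $((1-\sqrt5)/2)^{2n}$ correction in $D$.

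The main obstacle is precisely this last bookkeeping step. The underlying asymptotics are completely routine, but matching the answer term-for-term to the stated grouping requires a delicate algebraic rewrite in which both the coefficient of $\ln m$ and the coefficient of $\ln[((1+\sqrt5)/2)^{2n}-2]$ must be coerced into sharing the same $A$-factor without a sign error, and the small Lucas correction must be correctly separated from the $\ln$-argument and absorbed into $D$.
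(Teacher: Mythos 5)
Your overall strategy is the right one, and in fact it is the only derivation available: the paper states this corollary with no proof at all, so your computation of the two limits is supplying what the paper omits. The asymptotic part of your argument is correct. The recurrence of Lemma \ref{lemma5} has characteristic roots $\lambda_{\pm}=\tfrac12(\alpha_2+1\pm\zeta)$, the dominant root satisfies $\lambda_+>\max(1,|\lambda_-|)$ for $m\ge2$, $n\ge3$, and your two limits $\lambda_+/(\lambda_+-1)$ and $\lambda_+/(\lambda_+-1)^2$ are right, as is the identity $(\zeta-\alpha_2+1)(\zeta+\alpha_2-1)=4n(m-1)$. Your intermediate closed form
\begin{equation*}
\sigma=\frac{(\zeta+\alpha_2+1)\,\ln(L_{2n}-2)}{\zeta+\alpha_2-1}+\frac{2n(\zeta+\alpha_2+1)\,\ln m}{(\zeta+\alpha_2-1)^2}
\end{equation*}
is consistent with the definition in \ref{eq3.7} as the paper applies it in the cycle section (it reproduces the $1.70465$ value there when specialized to that model), and it checks numerically: for $n=4$, $m=2$ one gets $\approx 5.046$, matching $\ln\tau(G^{(i,2,4)})/|V^{(i)}|$ computed directly from Example~7 for $i=3,4$.

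The genuine gap is the step you defer as ``bookkeeping'': recasting your expression into the stated $A\times(B+C+D)$ is not a routine rewrite, because the two expressions are not equal. For $n=4$, $m=2$ the corollary's formula evaluates to $A(B+C+D)\approx 0.02192\times(174.7+1367.5+0.021)\approx 33.8$, whereas your (verifiably correct) value is $\approx 5.046$; the ratio is exactly $\lambda_+=(\zeta+\alpha_2+1)/2$, i.e.\ the stated formula corresponds to normalizing by a vertex count one further stage behind than the convention used in \ref{eq3.7} and in the cycle-graph corollary. So the plan ``coerce both coefficients into sharing the $A$-factor without a sign error'' cannot succeed as described. There is also a structural obstruction you should have noticed before attempting it: $D$ enters $B+C+D$ additively with no logarithm and without the polynomial coefficient that multiplies the $\ln$ in $C$, so it cannot arise from splitting $\ln(L_{2n}-2)=\ln\bigl(\phi^{2n}-2+\phi^{-2n}\bigr)$ into $\ln(\phi^{2n}-2)$ plus a correction, since $\ln(x+\epsilon)\ne\ln x+\epsilon$. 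You should either carry the algebra through explicitly (which would have exposed the mismatch) or perform the numerical sanity check; as it stands, your proof establishes a correct entropy formula but does not, and cannot, establish the corollary as printed.
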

\begin{figure}[h]
    \centering
    \includegraphics[width=0.15\textwidth, height=0.15\textwidth]{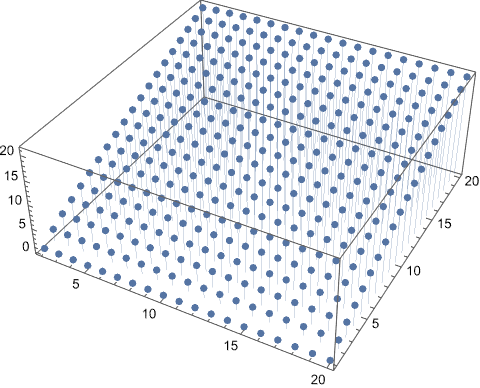}
    \caption{Plot of entropy versus m and n.}
    \label{Fig. 7}
\end{figure}

\newpage

\subsection{The Clustering Coefficient for self-similar graphs (Fractals) based on $W_n$ graphs}

\begin{lemma}
    Using the definition in the previous section and formula \ref{eq3.9}, it is easy to obtain a formula for the average clustering coefficients for wheel graphs $W_n$. 

    \begin{equation} \label{eq:4.9}
    \bar{\mathcal{A}}(W_{(n)}) = \frac{1}{n + 1} \left(\frac{2n}{3} + \frac{2}{n - 1}\right)
    \end{equation}
\end{lemma}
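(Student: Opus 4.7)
The plan is to exploit the two-orbit structure of $W_n$ under its automorphism group. The wheel has one central hub $h$ of degree $n$, adjacent to every vertex of the rim cycle $C_n$ on vertices $v_1,\ldots,v_n$, each of which has degree $3$. The dihedral rotations of the rim act transitively on the rim vertices, so they all share a common local clustering coefficient, and the hub carries its own. Thus the problem reduces to computing exactly two numbers and taking a weighted average with weights $n$ and $1$ against a denominator of $n+1$.

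First I would apply the definition (equation (9)) to a representative rim vertex $v_i$. Its neighborhood is $N_i = \{h, v_{i-1}, v_{i+1}\}$. Among these three neighbors, the edges $hv_{i-1}$ and $hv_{i+1}$ are always present because the hub is universal, while $v_{i-1}v_{i+1}$ is an edge only in the degenerate case $n=3$ (where $W_3 = K_4$). For the generic regime $n\geq 4$ this gives $\mathcal{E}_i = 2$ and $k_i = 3$, hence
\begin{equation*}
C_{v_i} \;=\; \frac{2\mathcal{E}_i}{k_i(k_i-1)} \;=\; \frac{2\cdot 2}{3\cdot 2} \;=\; \frac{2}{3}.
\end{equation*}
Next I would treat the hub $h$. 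Its neighborhood is the whole rim $\{v_1,\ldots,v_n\}$, and the subgraph induced on these neighbors is precisely the rim cycle $C_n$, contributing exactly $n$ edges among neighbors; with $k_h = n$ this yields $C_h = \tfrac{2n}{n(n-1)} = \tfrac{2}{n-1}$.

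Finally I would assemble the average by summing the clustering coefficients of all $n+1$ vertices and dividing by $n+1$:
\begin{equation*}
\bar{\mathcal{A}}(W_n) \;=\; \frac{1}{n+1}\left(n\cdot\frac{2}{3} \;+\; \frac{2}{n-1}\right) \;=\; \frac{1}{n+1}\left(\frac{2n}{3} + \frac{2}{n-1}\right),
\end{equation*}
which is exactly equation (12). There is no genuine obstacle in this argument; the only minor subtlety worth flagging is the excluded case $n=3$, in which $v_{i-1}v_{i+1}$ becomes an edge and each rim vertex has clustering coefficient $1$ rather than $2/3$, so the stated formula should be read with the tacit hypothesis $n\geq 4$ that is already compatible with the parameter range ($n\geq 3$, $m\geq 2$) used elsewhere in the paper.
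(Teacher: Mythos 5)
Your proof is correct and follows essentially the same route as the paper's: both compute the local clustering coefficient $2/3$ for the $n$ rim vertices (degree $3$, two hub--rim edges among neighbors) and $2/(n-1)$ for the hub (degree $n$, the rim cycle contributing $n$ edges), then average over the $n+1$ vertices. Your explicit caveat about the degenerate case $n=3$ (where $W_3=K_4$ and every local coefficient is $1$) is a slightly sharper statement of the exception the paper only mentions in passing.
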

\begin{proof}
    As wheels can be considered as $C_n$ joined with $K_1$, therefore we can assure that all vertices always will take degrees equal to $3$ and there will be just 2 connections with the neighbors except in the case of $W_3$. 

As the degree comes from the connection with the central vertex and the 2 vertices in the neighborhood, those 2 vertices by default are not connected and they are just connected with the third one which is the central. Therefore the degree is 3 and the connection between the neighbors is 2. 

The degree for any central graph will be $n$ and the connection between the neighborhood will be $n$ as well, the cycle that connects all the other vertices except the central one. 

For so, it is possible to ensure that the formula \ref{eq:4.9} is True.
\end{proof}
\noindent \textbf{Example 8:} Taking $W_{4}$ for example: 
\begin{figure}[h]
    \centering
    \includegraphics[width=0.15\textwidth, height=0.15\textwidth]{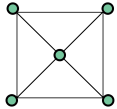}
    \caption{$W_4$ Graph}
    \label{fig: Wheel of 4}
\end{figure}

Therefore: $\bar{\mathcal{A}}(W_{4}) =\frac{1}{4 + 1} \left(\frac{2\times 4}{3} + \frac{2}{4 - 1}\right) = \frac{2}{3}$ Which is the correct and the same answer using a Python program. 
\begin{lemma} \label{lemma:6}
    The average clustering coefficient for any self-similar graph based on graphs of $W_n$ where $n \geq 3$ in stages $i = 1$ is as follows: 
    \begin{equation} \label{eq: 4.10}
        \bar{\mathcal{A}}(G^{1,m}_{W_n}) = \frac{\frac{2(n-1)\vert V^{1}\vert}{\binom{3}{2}} + \frac{n\vert V^{1}\vert}{\binom{n}{2}} + \frac{2n}{\binom{6}{2}} + \frac{2\times 1}{\binom{(n+3)}{2}}}{\vert V^{2}\vert}
    \end{equation}
\end{lemma}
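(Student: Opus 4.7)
My approach is to partition $V(G^{(1,m,n)}_{W_n})$ by the role each vertex plays in the construction, compute the clustering coefficient of each class separately, and then combine. By Lemma~\ref{lemma4}, $G^{(1,m,n)}_{W_n}$ is the union of the central graph $H^{(1,m)}$ (the edge-subdivision of $W_n$ produced by $\xi_1$) with $n+1=|V^{(1)}|$ copies of $W_n$ glued on via $\xi_2$, one at each of the original $W_n$ vertices. Matching the denominators $\binom{n+3}{2}$ and $\binom{6}{2}$ in \eqref{eq: 4.10} with the two possible attachment conventions (hub vs.\ rim) forces each attached copy to be identified with $H^{(1,m)}$ through one of its rim vertices rather than through its hub, and pinning this convention down formally is the first step.

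Under this convention, $V(G^{(1,m,n)}_{W_n})$ splits into five disjoint classes: the original hub of $W_n$ (one vertex), the $n$ original rim vertices, the $2n(m-1)$ internal subdivision vertices introduced by $\xi_1$, the $|V^{(1)}|$ hubs of the attached copies, and the $(n-1)|V^{(1)}|$ non-shared rim vertices of the attached copies. For each class I would read the pair $(k_v,\mathcal{E}_v)$ off directly from the neighbourhood, using two easy facts: each attached copy meets $H^{(1,m)}$ only at its pinning vertex, so no triangle bridges the two pieces, and consecutive interior vertices on a subdivided edge of $H^{(1,m)}$ are at path-distance two, so their two neighbours are non-adjacent. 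A short case check gives
\begin{equation*}
(k,\mathcal{E}) \;\in\; \{(n+3,2),\; (6,2),\; (2,0),\; (n,n),\; (3,2)\}
\end{equation*}
for the five classes respectively; the subdivision class contributes $C_v=0$ and therefore drops out of the numerator, which explains the absence of a subdivision term in \eqref{eq: 4.10}.

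Multiplying each $C_v=\mathcal{E}_v/\binom{k_v}{2}$ by the size of its class and dividing by $|V^{(2)}|$ then produces the four displayed summands of \eqref{eq: 4.10} in one-to-one correspondence with the four non-zero classes. The main obstacle I anticipate is class (iv): one has to verify that the pinning vertex, which sits on the rim of the attached copy, does not spuriously create a triangle using neighbours from $H^{(1,m)}$, and that the $n$ rim-cycle edges of the attached $W_n$ are precisely the edges counted by $\mathcal{E}=n$ among the attached hub's neighbourhood. Checking the class sizes $1,\, n,\, 2n(m-1),\, n+1,\, (n-1)(n+1)$ against Example~6 (for $n=4$, $m=2$ they sum to $33=|V^{(2)}|$) would confirm the decomposition before the formula is declared established.
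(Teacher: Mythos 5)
Your proposal is correct and follows essentially the same route as the paper's own proof: both arguments classify the vertices of $G^{(1,m,n)}$ into the original hub, the original rim vertices, the $\xi_1$-subdivision vertices (which contribute zero), and the hubs and non-shared rim vertices of the attached copies, read off the degree and neighbourhood-edge count for each class, and average. Your write-up is merely more explicit about the class sizes, the pairs $(k_v,\mathcal{E}_v)$, and the rim-vertex attachment convention; the only shared caveat is that taking $\mathcal{E}=2$ for the degree-$3$ rim vertices tacitly assumes $n\geq 4$, since for $n=3$ adjacent rim vertices of $W_3$ are themselves adjacent.
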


\begin{proof}
    As previously stated the wheels can be considered as $C_{n}$ joined with $K_{1}$, therefore the degree of each vertex will be transformed as follows: 
    \begin{center}
        \begin{tabular}{ c c c c c c c c c }
            $0$ & $\rightarrow$ & $1$ & $\rightarrow$ & $2$ & $\rightarrow$ & $...$ & $\rightarrow$ & $i$ \\
            $0$ & $\rightarrow$ & $2$ & $\rightarrow$ & $5$ & $\rightarrow$ & $...$ & $\rightarrow$ & $2 + 3(i-1)$ \\ 
            $3$ & $\rightarrow$ & $6$ & $\rightarrow$ & $9$ & $\rightarrow$ & $...$ & $\rightarrow$ & $3 + 3(i)$\\
            $n$ & $\rightarrow$ & $n+3$ & $\rightarrow$ & $n+6$ & $\rightarrow$ & $...$ & $\rightarrow$ & $n + 3(i)$
        \end{tabular}
    \end{center}
    As the zero degrees is a vertex that does not exist according to the definition of the operation $\xi_{1}$, then it is developed to be 2 according to applying $\xi_{1}$ to each edge, and then we attach them to $G^{(0,m,n)}$, so we attach each one to another 3 vertices, and so on. 3 is the original degree for any vertex and not the central vertex in any wheel, and as we apply $\xi_{2}$ we attach it to another 3 vertices, and so on. The central vertex has a degree of n and similarly will increase by 3 each time due to $\xi_{2}$. 
    Hence we know that each vertex contributes another $n$ vertex, so $(n-1)$ of them will have the degree 3 and with only 2 connections between the neighborhood, and $n$ of them will be the central vertex with degree $n$, the original vertices in $G^{(0,m,n)}$ are represented in the last two terms, as n of them will have the doubled degree as it attached to another 3 vertices from the new graph and the edges of the previous graph was replaced by the path of length $m$, with $(m-1)$ vertices in between the original vertices, therefore the overall degree will be 6 and with only 2 connections between the neighborhood. The last term is the original central vertex with a degree (n + 3) similar to the previous way, as it attached to another 3 vertices from the new graph, and the edges of the previous graph were replaced by the path of length $m$, with $(m-1)$ vertices in between the original vertices, therefore the overall degree will be (n + 3) with just 2 connections between the neighborhood. Finally, we have in $G^{(1,m,n)}$, $V^{(2)}$ vertex. 
\end{proof}
\noindent \textbf{Example 9:} Taking $W_5$,
\begin{figure}[h]
    \centering
    \includegraphics[width=0.15\textwidth, height=0.15\textwidth]{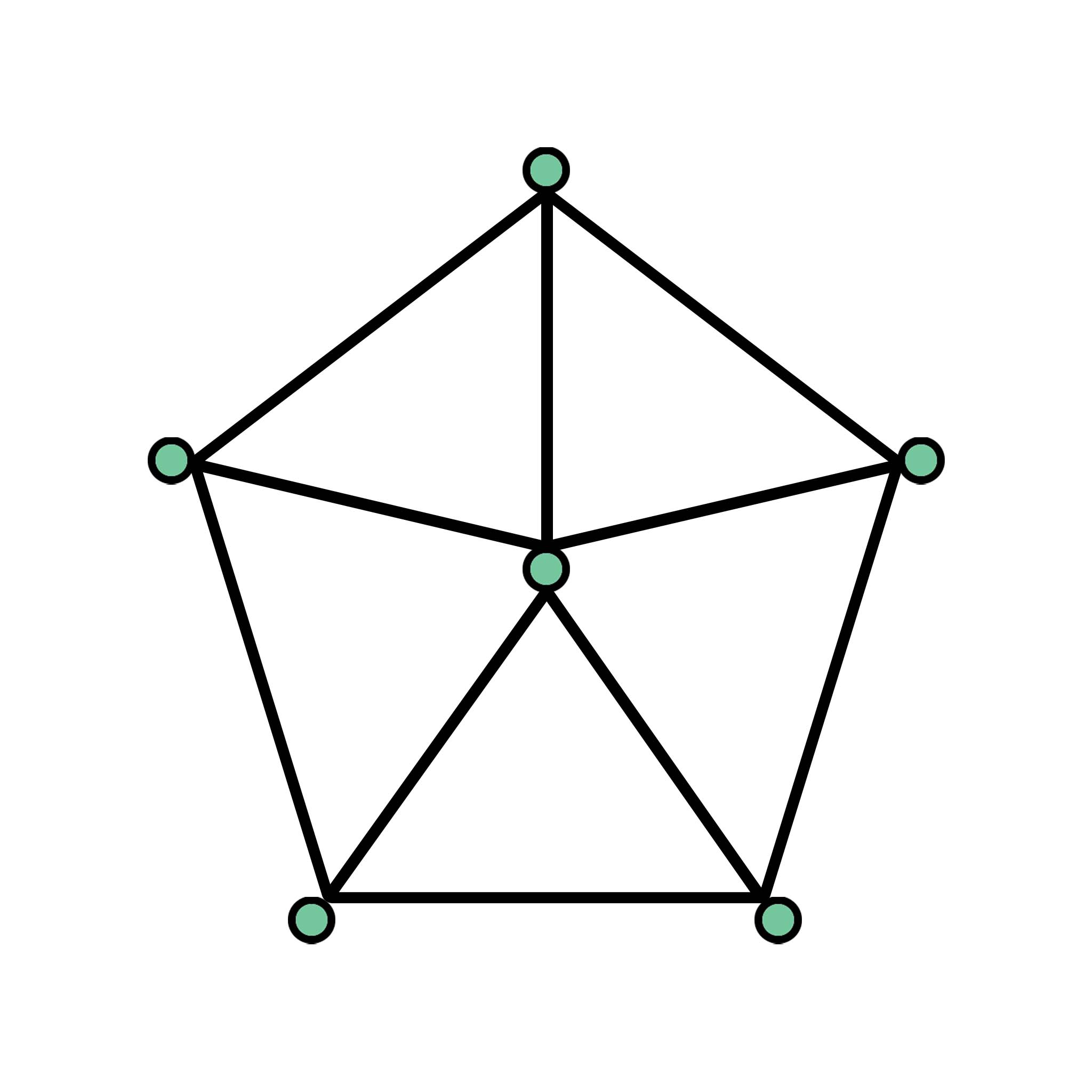}
    \caption{$G^{(0,m,5)}$ Graph}
    \label{fig:W5(0)}
\end{figure}
\begin{figure}[h]
    \centering
    \includegraphics[width=0.3\textwidth, height=0.3\textwidth]{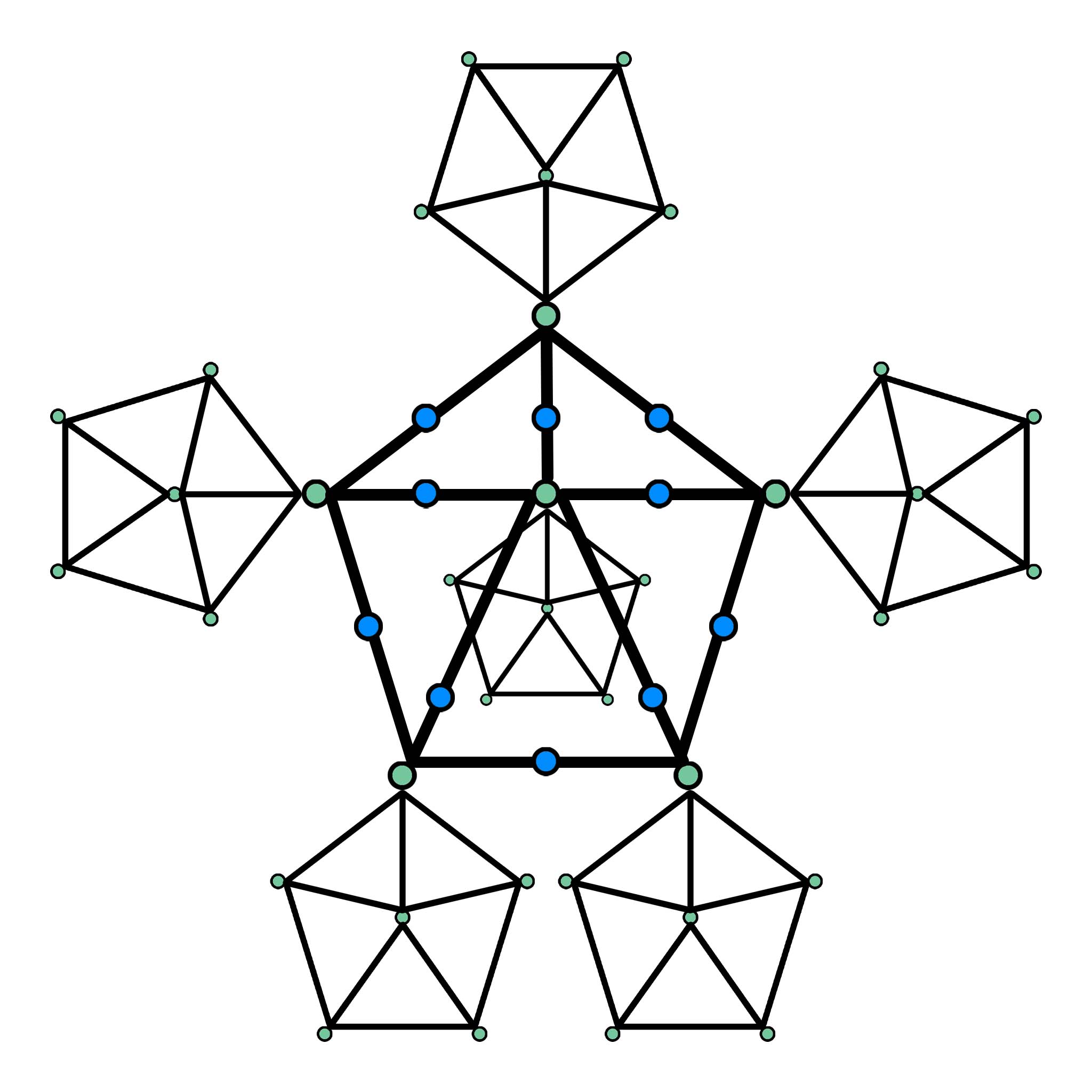}
    \caption{$G^{(1,2,5)}$ Graph}
    \label{fig:W5(1)}
\end{figure}

So, $\bar{\mathcal{A}}(G^{1,2}_{W_5}) = \frac{\frac{2(5-1)\vert V^{1}\vert}{\binom{3}{2}} + \frac{5\vert V^{1}\vert}{\binom{5}{2}} + \frac{2\times 5}{\binom{6}{2}} + \frac{2\times 1}{\binom{(5+3)}{2}}}{\vert V^{2}\vert} = \frac{815}{1932}$ Which is the correct and the same answer using a Python program. 

\begin{theorem}
    The average clustering coefficient for any self-similar graph based on graphs of $W_n$, such that $n \geq 3$, and in any stage $i \geq 1$ is as follows: 
    \begin{equation}
        \bar{\mathcal{A}}(G^{(i,m,n)}) = \frac{{\varsigma_{1}} + {\varsigma_{2}} + {\varsigma_{3}} + {\varsigma_{4}} + {\varsigma_{5}}}{\vert V^{(i + 1)} \vert}
    \end{equation}
    Where the constants are as follows: 
    \begin{equation*}
        \begin{split}
            {\varsigma_{1}} &= \frac{n\times 2}{\binom{3(i+1)}{2}} \\
            {\varsigma_{2}} &= \frac{n\times \vert V^{i}\vert}{\binom{n}{2}} \\
            {\varsigma_{3}} &= \sum^{i-1}_{j = 0} \frac{2(n-1)\vert V^{(i-j)} \vert}{\binom{3(j+1)}{2}} \\
            {\varsigma_{4}} &= \sum^{i}_{j = 1} \frac{2 \vert V^{(i-j)} \vert}{\binom{(3j+n)}{2}} \\
            {\varsigma_{5}} &= \sum^{i-1}_{j = 1} \frac{2(m-1) \vert E^{(i-j)} \vert}{\binom{(3j+2)}{2}}
        \end{split}
    \end{equation*}
\end{theorem}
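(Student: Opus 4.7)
The plan is to partition the vertices of $G^{(i,m,n)}$ according to their \emph{birth stage} $b\in\{0,1,\dots,i\}$ and their local type at birth (hub of a newly attached wheel, rim of a newly attached wheel, or subdivision vertex produced by $\xi_{1}$), and then to add up the contributions $C_{v} = 2\mathcal{E}_{v}/(k_{v}(k_{v}-1))$ from (\ref{eq3.9}) class by class. Lemma \ref{lemma:6} already handles $i=1$; I would proceed by induction on $i$, using the self-similar decomposition of Lemma \ref{lemma4}.

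The decisive technical step is a local claim about how a single stage (first $\xi_{2}$, then $\xi_{1}$) transforms the open neighbourhood of a vertex $v$ that was already present in $G^{(i-1,m,n)}$. The operation $\xi_{2}$ identifies $v$ with one rim vertex of a fresh $W_{n}$, so $v$ acquires exactly three new neighbours (the new hub and the two adjacent new rims), with exactly two edges among them (the two spokes incident to that rim). The operation $\xi_{1}$ then subdivides every pre-existing edge at $v$, replacing each of its old neighbours by a degree-two subdivision vertex; in particular every triangle through $v$ in $G^{(i-1,m,n)}$ is destroyed, and no old neighbour of $v$ is adjacent to any new neighbour of $v$, since the fresh wheel is joined to the rest of the graph only through $v$. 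The resulting invariant is: for every vertex $v$ born strictly before stage $i$, one has $k_{v} = k_{v}^{\text{birth}} + 3(i-b)$ and $\mathcal{E}_{v}=2$, where $k_{v}^{\text{birth}}\in\{3,n,2\}$ according as $v$ is rim, hub, or subdivision.

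With this invariant in hand, $\sum_{v}C_{v}$ decomposes as follows. The $n$ original rim vertices born at $b=0$ have $k_{v}=3(i+1)$ and $\mathcal{E}_{v}=2$, producing $\varsigma_{1}$. For $1\le b\le i$ the attachments at stage $b$ bring in $(n-1)V^{(b)}$ new rims with degree $3(i-b+1)$ and $\mathcal{E}=2$; writing $j=i-b$, these yield $\varsigma_{3}$, with the $j=0$ term absorbing the freshly born stage-$i$ rims whose two surviving neighbour–neighbour edges are the spokes of their own brand-new wheel. The $V^{(i)}$ hubs born at stage $i$ still have their complete rim cycle intact, so each contributes $\mathcal{E}_{v}=n$ with $k_{v}=n$, producing $\varsigma_{2}$. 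Older hubs satisfy $\mathcal{E}_{v}=2$ and $k_{v}=n+3(i-b)$; summing the count $1 = V^{(0)}$ at $b=0$ together with $V^{(b)}$ for $1\le b\le i-1$ gives $\varsigma_{4}$ after the substitution $j=i-b$. Subdivision vertices born at stage $i$ have $\mathcal{E}_{v}=0$ and drop out, while those born at stage $b\in\{1,\dots,i-1\}$ number $(m-1)E^{(b)}$ and produce $\varsigma_{5}$. Dividing by $|V^{(i+1)}|$ gives the stated formula.

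The main obstacle is the local invariant $\mathcal{E}_{v}=2$ for every old vertex. The delicate points are (i) that the triangles introduced by the wheel attached at stage $b+1$ are all destroyed when $\xi_{1}$ at stage $b+2$ subdivides their spoke edges, so only the most recent attachment at $v$ contributes to $\mathcal{E}_{v}$; (ii) that no cross-edges appear between the different ``rings'' of attachments around $v$, which follows from the single-vertex identification used in $\xi_{2}$; and (iii) that the base-$W_{n}$ hub and rims are correctly captured by the $j=i$ term of $\varsigma_{4}$ and the stand-alone term $\varsigma_{1}$, rather than being double-counted inside $\varsigma_{2}$ or $\varsigma_{3}$. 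Once these points are settled, matching the five sums to the five classes is a routine bookkeeping using the counts $V^{(b)}$, $(n-1)V^{(b)}$, and $(m-1)E^{(b)}$ for the hubs, rims, and subdivision vertices produced at each stage.
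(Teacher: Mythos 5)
Your proposal is correct and follows essentially the same route as the paper's own proof: it classifies vertices by birth stage and type (rim, hub, subdivision), uses the degree increment of $3$ per subsequent stage, and invokes the invariant that every previously existing vertex retains exactly two edges among its neighbours, then matches the five classes to $\varsigma_1,\dots,\varsigma_5$; your write-up is in fact considerably more explicit than the paper's about why $\mathcal{E}_v=2$ persists (triangle destruction by $\xi_1$ and the single-vertex attachment in $\xi_2$). The only caveat, which you share with the paper, is that the claim of exactly two edges among the three new neighbours fails for $n=3$, since $W_3=K_4$ makes the two adjacent rim vertices themselves adjacent.
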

\begin{proof}
    Using lemma \ref{lemma:6} and it's proof that the degree of each vertex will be transformed. Therefore in each $i$ stage, we have another 3 degrees added to the set of degrees that we already have. So for example in stage 2 with $n = 5$, we will have the following set of degrees: \{0, 2, 5, 3, 6, 9, 5, 8, 11\}, consequently, the number of vertices is preserved and they are just transforming the degree by the nature of the two operations $\xi_{1}$ and $\xi_{2}$. 
    
    As the graph $G^{(0,m,n)}$ begins with $(n+1)$ vertices, $n$ vertex with degree $3$ and the central vertex with degree $n$. then with $G^{(1,m,n)}$, each vertex has been added due to $\xi_{1}$ will have degree $2$, and each vertex other than the central vertices have been added due to $\xi_{2}$ will have degree $3$, and the central vertices have been added due to $\xi_{2}$ will have $n$ degree. Furthermore, the original $n$ vertices will have a degree of 6, and the original central vertex will have a degree of $(n + 3)$. And so on. Furthermore, plugging $i = 1$, will give us the formula in lemma \ref{lemma:6}.
\end{proof}

\bibliography{Bibliography}

\end{document}